\DeclarePairedDelimiter\ceil{\lceil}{\rceil}
\DeclarePairedDelimiter\floor{\lfloor}{\rfloor}
\providecommand{\algorithmname}{Algorithm}
\newtheorem{theorem}{Theorem}
\newtheorem{lemma}{Lemma}
\newtheorem*{corollary}{Corollary}
\newtheorem{corollar}{Corollary}
\newtheorem{proposition}{Proposition}
\newtheorem*{propos}{Proposition}
\begin{document}

\title{Consistency Cuts for Dantzig-Wolfe Reformulations}
\author[1]{Jens Vinther Clausen \thanks{corresponding author\newline \emph{Email addresses:} jevcl@dtu.dk (Jens Vinther Clausen), rmlu@dtu.dk (Richard Lusby), ropke@dtu.dk (Stefan Ropke)}}
\author[1]{Richard Lusby}
\author[1]{Stefan Ropke}

\affil[1]{\scriptsize{Department of Technology, Management and Economics, Technical University of Denmark, Denmark}}

\maketitle

\begin{abstract}

\noindent This paper introduces a family of valid inequalities, that we term consistency cuts, to be applied to a Dantzig-Wolfe reformulation (or decomposition) with linking variables. We prove that these cuts ensure an integer solution to the corresponding Dantzig-Wolfe relaxation when certain criteria to the structure of the decomposition are met.
We implement the cuts and use them to solve a commonly used test set of 200 instances of the temporal knapsack problem. We assess the performance with and without the cuts and compare further to CPLEX and other solution methods that have historically been used to solve the test set. By separating consistency cuts we show that we can obtain optimal integer solutions much faster than the other methods and even solve the remaining unsolved problems in the test set. We also perform a second test on instances from the MIPLIB 2017 online library of mixed-integer programs, showing the potential of the cuts on a wider range of problems.
\end{abstract}

\noindent \emph{Keywords:} Integer . < Programming, Cutting plane/facet < Algorithms < Integer . < Programming, Theory < Integer . < Programming

\clearpage

\section{Introduction}

\emph{Dantzig-Wolfe reformulation} (DW reformulation) or \emph{DW decomposition} was originally proposed in \citep{dantzig1960decomposition} as a way to reformulate a \emph{linear program} (LP) in order to exploit a specific problem structure. It has since been applied to \emph{integer linear programs} (IPs), and \emph{mixed-integer linear programs} (MIPs) \citep{savelsbergh1997branch,barnhart1998branch,vanderbeck2010reformulation}. Consider a general IP of the form:

\begin{singlespace}
$\,$

\noindent($\mathsf{IP}$) \hspace{1.9in} $\mathrm{min}\{cx|Ax\leq b,x\in\mathbb{Z}^{n}\}$

$\,$
\end{singlespace}

\noindent where $x$ is a vector of decision variables, $A$ is the constraint matrix, $b$ is the right-hand side vector of the constraints, and $c$ is the cost vector. Let $\mathcal{S}$ denote the set of constraints. In general, DW reformulation splits $\mathcal{S}$ into $k+1$ disjoint sets (blocks) $\mathcal{S}_{i}$ for $i\in\{0,\ldots,k\}$ such that $\bigcup_{i\in\{0,\ldots,k\}}\mathcal{S}_{i}=\mathcal{S}$. We refer to the choice of what constraints go into which blocks as the \emph{decomposition}.

In most applications of DW reformulation the decomposition is chosen such that the corresponding constraint matrix $A$ has \emph{single-bordered block-diagonal form}:

\begin{singlespace}
\begin{equation*}
\begin{aligned}A= & \left[\begin{array}{ccccc}
H & E_{1} & E_{2} & \cdots & E_{k}\\
 & D_{1}\\
 &  & D_{2}\\
 &  &  & \ddots\\
 &  &  &  & D_{k}
\end{array}\right]\end{aligned}
\end{equation*}
\end{singlespace}

\noindent where $D_{1}$,$D_{2}$,\ldots,$D_{k}$ are the coefficients of the variables in the constraints of sets $\mathcal{S}_{1}$,$\mathcal{S}_{2}$,\ldots,$\mathcal{S}_{k}$ respectively, and $H,E_{1},E_{2},\ldots,E_{k}$
are the coefficients of the variables in the constraints of $\mathcal{S}_{0}$, which we refer to as the \emph{coupling constraints}. This structure implies that the variables that appear in the constraints of each of the $\mathcal{S}_{i}$ for $i\in\{1,\ldots,k\}$ are unique to that set and $\mathcal{S}_{0}$.

If the constraint matrix does not have single-bordered block-diagonal form given the chosen decomposition, it means that there are one or more variables that have nonzero coefficients in the constraints of more than one of the sets $\mathcal{S}_{i}$
for $i\in\{1,\ldots,k\}$. We refer to these variables as \emph{linking variables}. The corresponding constraint matrix has \emph{double-bordered block-diagonal form}:

\begin{singlespace}
\begin{equation}
\begin{aligned}A= & \left[\begin{array}{ccccc}
H & E_{1} & E_{2} & \cdots & E_{k}\\
F_{1} & D_{1}\\
F_{2} &  & D_{2}\\
\vdots &  &  & \ddots\\
F_{k} &  &  &  & D_{k}
\end{array}\right]\end{aligned}
\label{eq:arrow_head_struct}
\end{equation}
\end{singlespace}

\noindent where $H,F_{1}$, $F_{2}$, \ldots, $F_{k}$ are the coefficients of the linking variables. Examples of DW reformulations based on double-bordered block-diagonal form can be found in \citep{bergner2015automatic, caprara2013uncommon, mingozzi2017exact, gschwind2017stabilized, zhao2018fixed}. It is well known that \emph{Lagrangian relaxation} and DW reformulation are equivalent in many ways \citep{guignard2003lagrangean, vanderbeck2006generic, pessoa2018automation}. Decompositions based on double-bordered block-diagonal matrices are also used within research based on Lagrangian relaxation. Here the technique is known as \emph{Lagrangian decomposition} \citep{guignard1987lagrangean, guignard2003lagrangean}, and appears to have been used earlier and more frequently compared to its DW counterpart. Examples are \citep{glover1988layering, nilsson1997variable, wu2003decomposition,freville2005multidimensional, aguado2009fixed, tang2011improved}.

The relaxation of the DW reformulation (\emph{the DW relaxation}) often includes a huge number of variables and is therefore solved using \emph{column generation} \citep{lubbecke2005selected}. This involves iteratively generating columns for the DW relaxation (\emph{master problem}) by solving a set of \emph{subproblems}, one for each block in the decomposition. The DW relaxation bounds can be used in a \emph{branch-and-price} algorithm \citep{barnhart1998branch} to find integer solutions. Ideally, the DW relaxation provides a tighter bound than the LP relaxation of the original model, without being too much harder to compute. This potentially means that less branching will be necessary and an optimal integer solution would be found faster than if using a branch-and-bound algorithm \citep{lawler1966branch} using the LP relaxation of the original model.

In the absence of coupling constraints, matrix $A$ has a structure that is also amenable to Benders Decomposition, see e.g.~\citep{benders1962}; on fixing the values of the linking variables, the constraint coefficients of which are contained in matrices $F_1, F_2,\dots, F_k$,  the resulting constraint matrix can be decomposed into $k$ distinct components (subproblems), that can be solved separately. The constraint matrix of subproblem $i$ is given by $D_i$. In Benders decomposition, a master problem is used to optimize the values of the linking variables, while the subproblems are used to verify feasibility, and optimality, of the solution to the master problem.  Benders decomposition was originally designed to work in situations where the subproblems are continuous linear programs since duality theory is used to derive feasibility and optimality cuts, which are used to guide the master problem search. The technique can also be applied in situations where the subproblems define integer programs, see e.g. \citep{hooker2003a}; however, the lack of duality makes the method much less generic. The derivation of feasibility and optimality cuts is often tailored to the specific application, utilizing the structure of the problem.

The \textit{temporal knapsack problem} (TKP) is a generalization of the well-known \textit{knapsack problem} (KP) \citep{kellerer2003knapsack}. The TKP was named in \citep{bartlett2005temporal} which lists a number of applications regarding management of sparse resources: CPU time, communication bandwidth, computer memory, or disc space. It was later encountered in railway service design in \citep{caprara2011freight}. Multiple solution methods have been proposed for the TKP: \citep{caprara2013uncommon, gschwind2017stabilized, caprara2016solving} all use DW reformulation whereas \citep{clautiaux2019dynamic} present a dynamic programming method. \citep{caprara2013uncommon} compiled a benchmark set of 200 instances of the TKP which has since become the standard test set in the literature when benchmarking solution methods for the TKP.

\emph{Valid linear inequalities} (cuts) are inequalities that can be added to an LP in order to reduce the solution space and tighten the relaxation bound. When incorporated in a branch-and-price framework, we refer to the complete algorithm as \emph{branch-and-cut-and-price} \citep{Belov2005, de2003integer}; ideally, the cuts reduce the amount of branching needed. We differentiate between two types of cuts: robust cuts, e.g. \citep{kohl19992, van2000time, fukasawa2006robust, desaulniers2011cutting}, are cuts that only modify the master problem, and non-robust cuts, e.g. \citep{nemhauser1991polyhedral, jepsen2008subset, petersen2008chvatal, spoorendonk2010clique, dabia2019cover, desaulniers2011cutting}, are cuts that modify both the master and subproblems.

This paper has four main contributions: 1) We introduce a set of non-robust cuts that we term \emph{consistency cuts} to be applied to DW relaxations with binary linking variables. 2) We prove that these cuts ensure optimal solutions to the original problem as long as the structure of the decomposition satisfies certain conditions. For instance, these conditions are satisfied for the TKP, when using the decomposition proposed by \citep{caprara2013uncommon}. 3) We use this decomposition in a branch-and-cut-and-price algorithm \citep{vanderbeck2010reformulation} to solve the 200 instances in the TKP benchmark set. The algorithm finds optimal integer solutions much faster than CPLEX and other solution methods proposed in the literature. It is the first algorithm to solve all instances in the benchmark set. 4) We apply the cuts to 52 instances from the MIPLIB2017 online library of MIPs \citep{MIPLIB2017}. For each instance 5 different decompositions are tested, and for 20 of the instances the cuts improve the root bound of at least one of the 5 decompositions. Even though these instances are easily solved to optimality with CPLEX, this demonstrates the potential of the cuts on generic MIPs.


The rest of the paper is structured as follows. Section \ref{DWR} shows how to apply DW reformulation to the general problem ($\mathsf{IP}$). In Section \ref{sharingCuts} the consistency cuts are presented and their theoretical properties are examined. Section \ref{TKP} describes the TKP and its decomposition. Section \ref{Implementation} presents the corresponding master problem and subproblems for column generation. It also elaborates on the implementation details of the algorithm. In Section \ref{CompResults}, computational tests are presented. Finally, we conclude the paper and present ideas for future research directions in Section \ref{Conclusion}.

\section{Dantzig-Wolfe reformulation with linking variables}\label{DWR}

A detailed description of DW reformulation without linking variables can be found in \citep{vanderbeck2010reformulation}. Very few descriptions of DW reformulation with linking variables for generic problems exist \citep{bergner2015automatic}. Therefore, in the following, we present DW reformulation with linking variables for the general problem ($\mathsf{IP}$) using \emph{convexification} as opposed to \emph{discretization}, see \citep{vanderbeck2010reformulation}. This definition naturally extends to MIPs, but for ease of presentation we use an IP here. The constraint matrix $A$ in ($\mathsf{IP}$) is assumed to have double-bordered block-diagonal form, i.e. can be written as (\ref{eq:arrow_head_struct}). We define, for each $i\in\{1,\ldots,k\}$, $x_{i}$ as the vector of variables that are unique to the constraints of $\mathcal{S}_{i}$, $n_{i}$ as the length of $x_{i}$, $b_{i}$ as the right-hand side vector of the constraints in $\mathcal{S}_{i}$, and $c_{i}$ as the cost vector of $x_{i}$. Furthermore, we define $x_{0}$ as the vector of linking variables, $c_{0}$ as the cost vector of $x_{0}$, and $b_{0}$ as the right-hand side vector of the constraints in $\mathcal{S}_{0}$. The corresponding IP is:
\begin{singlespace}
$\,$

\begin{minipage}{\textwidth-\parindent}

\begin{equation}
\mathrm{min}\:cx\label{eq:VS1_obj}
\end{equation}

subject to

\begin{alignat}{2}
Hx_{0}+\sum_{i=1}^{k}E_{i}x_{i} & \leq b_{0}\\
F_{i}x_{0}+D_{i}x_{i} & \leq b_{i} & \quad & \forall i\in\{1,\ldots,k\}\label{eq:DW1_blocks}\\
x_{i} & \in\mathbb{Z}^{n_{i}} & \quad & \forall i\in\{0,\ldots,k\}
\end{alignat}

\end{minipage}

$\,$
\end{singlespace}

\noindent Throughout this paper we assume that the decomposition is chosen such that 

\begin{singlespace}
\[
\bar{X}_{i}={\textstyle \left\{ \left(\begin{array}{c}
x_{0}\\
x_{i}
\end{array}\right)\in\mathbb{\mathbb{R}}^{n_{0}+n_{i}}\left|F_{i}x_{0}+D_{i}x_{i}\leq b_{i}\right.\right\} }
\]
\end{singlespace}

\noindent for $i\in\{1,\ldots,k\}$ are bounded. This eliminates the need for extreme rays when applying Minkowski-Weyl's Theorem \citep{schrijver1986theory} later. Next, we convexify the constraints of each block separately, i.e. we define:

\begin{singlespace}
\begin{equation}
X_{i}={\textstyle \operatorname{conv}}\left\{ \left(\begin{array}{c}
x_{0}\\
x_{i}
\end{array}\right)\in\mathbb{Z}^{n_{0}+n_{i}}\left|\left(\begin{array}{c}
x_{0}\\
x_{i}
\end{array}\right)\in\bar{X}_{i}\right.\right\}\label{eq:convexhull}
\end{equation}
\end{singlespace}

\noindent for $i\in\{1,\ldots,k\}$, i.e. the convex hulls of the integer feasible points to the constraints in each of the sets $\mathcal{S}_{i}$ for $i\in\{1,\ldots,k\}$. Since $\bar{X}_{i}$ is bounded so is $X_{i}$. By Minkowski-Weyl's Theorem we have that a bounded convex set can be represented as a convex combination of its extreme points. 

We define, for each $i\in\{1,\ldots,k\}$, $(\begin{array}{c c c}\bar{x}_{0p}^{i} & \bar{x}_{ip}\end{array})^\intercal$ for $p\in\mathcal{P}_{i}$ as the extreme points of $X_{i}$, i.e. $\bar{x}_{0p}^{i}$ is the value of the linking variables and $\bar{x}_{ip}$ is the value of the variables that only appear in the constraints of $\mathcal{S}_{i}$ and $\mathcal{S}_{0}$. Notice that we need to add an extra $i$-index to $\bar{x}_{0p}^{i}$ to be able to identify from which $X_{i}$ it originates. We also introduce variables $\lambda_{ip}$ as the weight with which extreme point $p$ of block $i$ is used. This allows us to write:

\begin{singlespace}
\[
X'_{i}=\left\{ \left(\begin{array}{c}
x_{0}\\
x_{i}
\end{array}\right)\in\mathbb{R}^{n_{0}+n_{i}}\left|\left(\begin{array}{c}
x_{0}\\
x_{i}
\end{array}\right)=\sum_{p\in\mathcal{P}_{i}}\lambda_{ip}\left(\begin{array}{c}
\bar{x}_{0p}^{i}\\
\bar{x}_{ip}
\end{array}\right),\sum_{p\in\mathcal{P}_{i}}\lambda_{ip}=1,\lambda_{ip}\geq0\ \forall p\in\mathcal{P}_{i}\right.\right\}
\]
\end{singlespace}

\noindent According to Minkowski-Weyl's Theorem, $X_{i}$ and $X'_{i}$ are equivalent. We now substitute (\ref{eq:DW1_blocks}) with $(\begin{array}{c c c}
x_{0} & x_{i}
\end{array})^\intercal\in X'_{i}\ \forall i\in\{1,\ldots,k\}$ which leads to the model:

\begin{singlespace}

$\,$

\noindent\begin{minipage}{\textwidth}

\noindent($\mathsf{DW1}$)

\begin{minipage}{\textwidth-\parindent}

\begin{equation}
\mathrm{min}\:c_{0}x_{0}+\sum_{i=1}^{k}c_{i}x_{i}\label{eq:VS2_obj}
\end{equation}

subject to

\begin{alignat}{2}
Hx_{0}+\sum_{i=1}^{k}E_{i}x_{i} & \leq b_{0}\label{eq:VS2_set1}\\
\left(\begin{array}{c}
x_{0}\\
x_{i}
\end{array}\right) & =\sum_{p\in\mathcal{P}_{i}}\lambda_{ip}\left(\begin{array}{c}
\bar{x}_{0p}^{i}\\
\bar{x}_{ip}
\end{array}\right) & \quad & \forall i\in\{1,\ldots,k\}\label{eq:VS2_set2}\\
\sum_{p\in\mathcal{P}_{i}}\lambda_{ip} & =1 & \quad & \forall i\in\{1,\ldots,k\}\label{eq:VS2_conv1}\\
\lambda_{ip} & \geq0 & \quad & \forall p\in\mathcal{P}_{i},i\in\{1,\ldots,k\}\label{VS2_conv2}\\
x_{i} & \in\mathbb{Z}^{n_{i}} & \quad & \forall i\in\{0,\ldots,k\}\label{eq:VS2_dom}
\end{alignat}

\end{minipage}

\end{minipage}

$\,$

\end{singlespace}

\noindent Next, we can substitute the right-hand side of (\ref{eq:VS2_set2}) into the rest of the model. This is easily done for the $x_{i}$ part of the vector but needs a bit more thought for the $x_{0}$ part. Notice that $x_{0}$ has a unique right-hand side for each $i\in\{1,\ldots,k\}$ in (\ref{eq:VS2_set2}). We can pick any one of these right-hand sides and substitute it into the others, and the rest of the model. Naturally, we pick the first one. This substitution leads us to ($\mathsf{DW2}$), which we refer to as the DW reformulation:

\begin{singlespace}
$\,$

\noindent($\mathsf{DW2}$)

\begin{minipage}{\textwidth-\parindent}

\begin{equation}
\mathrm{min}\:c_{0}\sum_{p\in\mathcal{P}_{1}}\lambda_{1p}\bar{x}_{0p}^{1}+\sum_{i=1}^{k}c_{i}\sum_{p\in\mathcal{P}_{i}}\lambda_{ip}\bar{x}_{ip}\label{eq:VS3_obj}
\end{equation}

subject to

\begin{alignat}{2}
H\sum_{p\in\mathcal{P}_{1}}\lambda_{1p}\bar{x}_{0p}^{1}+\sum_{i=1}^{k}E_{i}\sum_{p\in\mathcal{P}_{i}}\lambda_{ip}\bar{x}_{ip} & \leq b_{0}\label{eq:VS3_set1}\\
\sum_{p\in\mathcal{P}_{1}}\lambda_{1p}\bar{x}_{0p}^{1}-\sum_{p\in\mathcal{P}_{i}}\lambda_{ip}\bar{x}_{0p}^{i} & =0 & \quad & \forall i\in\{2,\ldots,k\}\label{eq:VS3_split}\\
\sum_{p\in\mathcal{P}_{i}}\lambda_{ip} & =1 & \quad & \forall i\in\{1,\ldots,k\}\label{eq:VS3_conv1}\\
\lambda_{ip} & \geq0 & \quad & \forall p\in\mathcal{P}_{i},i\in\{1,\ldots,k\}\label{eq:VS3_conv2}\\
\sum_{p\in\mathcal{P}_{1}}\lambda_{1p}\bar{x}_{0p}^{1} & \in\mathbb{Z}^{n_{0}}\label{eq:VS3_dom1}\\
\sum_{p\in\mathcal{P}_{i}}\lambda_{ip}\bar{x}_{ip} & \in\mathbb{Z}^{n_{i}} & \quad & \forall i\in\{1,\ldots,k\}\label{eq:VS3_dom2}
\end{alignat}

\end{minipage}

$\,$
\end{singlespace}

\noindent Whereas the $x_{i}$ part of (\ref{eq:VS2_set2}) disappears when substituting due to its uniqueness, we are left with $k-1$ constraints from the $x_{0}$ part of it, see (\ref{eq:VS3_split}). We refer to these constraints as the \emph{variable linking} constraints. If the constraint matrix had single-bordered block-diagonal form, i.e. no linking variables, the DW reformulation would not have any variable linking constraints. In practice, we often use a decomposition where some of the linking variables only appear in the constraints of some of the sets $\mathcal{S}_{i}$ for $i\in\{1,\ldots,k\}$. If this is the case, we end up having fewer variable linking constraints. Of course, we could also have variables that only appear in the constraints of $\mathcal{S}_{0}$. Such variables are simply left unchanged with no substitution to be made. The LP relaxation of ($\mathsf{DW2}$) is given by (\ref{eq:VS3_obj})-(\ref{eq:VS3_conv2}), which we refer to as the DW relaxation or ($\mathsf{DW3}$).

\subsection{Example} \label{example1}

Consider the simple integer program:
\begin{singlespace}
\begin{equation}
\mathrm{max}\:3x_1+2x_2+2x_3+3x_4\label{eq:Ex_obj}
\end{equation}

subject to


\begin{alignat}{3}
2x_1\, + \, & x_2+x_3 & \ \leq\  & 2\label{eq:Ex_con1}\\
 & x_2+x_3+x_4 & \ \leq\  & 2\label{eq:Ex_con2}\\
 & x_1,x_2,x_3,x_4 & \ \in\  & \{0,1\}\label{eq:Ex_dom}
\end{alignat}
\end{singlespace}

\noindent We choose to decompose the problem by defining the three blocks $\mathcal{S}_{0}=\emptyset$, $\mathcal{S}_{1}=\{(\ref{eq:Ex_con1})\}$, and $\mathcal{S}_{2}=\{(\ref{eq:Ex_con2})\}$, i.e. $x_2$ and $x_3$ are linking variables. Table~\ref{tab:smallexample} shows the complete set of extreme points in the DW relaxation of model (\ref{eq:Ex_obj})-(\ref{eq:Ex_dom}). Observe that the linking variables do not contribute to the profit of the columns for the second block, see~\eqref{eq:VS3_obj}. The optimal solution uses four of the columns:

\begin{singlespace}
\begin{table}[H]
\centering
\begin{tabular}{c|ccccc|ccccccc}
\hline
 & \multicolumn{5}{c|}{Block 1} & \multicolumn{7}{c}{Block 2} \\
\hline
$\lambda$ & 0 & 0.5 & 0 & 0 & 0.5 & 0 & 0 & 0 & 0 & 0 & 0.5 & 0.5 \\
$c$ & 0 & 3 & 2 & 2 & 4 & 0 & 0 & 0 & 3 & 0 & 3 & 3 \\
\hline
$\bar{x}_1$ & 0 & 1 & 0 & 0 & 0 & - & - & - & - & - & - & - \\
$\bar{x}_2$ & 0 & 0 & 1 & 0 & 1 & 0 & 1 & 0 & 0 & 1 & 1 & 0 \\
$\bar{x}_3$ & 0 & 0 & 0 & 1 & 1 & 0 & 0 & 1 & 0 & 1 & 0 & 1 \\
$\bar{x}_4$ & - & - & - & - & - & 0 & 0 & 0 & 1 & 0 & 1 & 1 \\
\hline
\end{tabular}
\caption{A small example}
\label{tab:smallexample}
\end{table}
\end{singlespace}

\noindent Table~\ref{tab:smallexample} shows the columns for block~1 and block~2 separately. Notice that the columns from block~1 are missing an entry for $\bar{x}_4$. This is because $x_4$ is not part of~\eqref{eq:Ex_con1}. Likewise, the columns for block~2 are missing entries for $\bar{x}_1$. The second row indicates the weight of the column in the solution. Only the columns with weight 0.5 are used in this solution. The second row indicates the profit of each column. The weighted sum of the profits of this solution is 6.5. We know from~\eqref{eq:VS3_split} that in any feasible solution the weighted sum of each linking variable across all columns for each value of $i$ must be the same. Checking this, we see that both $\bar{x}_2$ and $\bar{x}_3$ have a value of 0.5 for both $i=1$ and $i=2$. In the following section, we discuss how to improve the bound given by this solution by adding cuts to the DW relaxation.


\section{Consistency cuts}\label{sharingCuts}
We propose a family of valid inequalities, which we term consistency cuts, to improve the bound of a DW relaxation when two or more blocks share multiple binary variables. We define a \emph{pattern} as a set of variable value pairs. Each pair defines the value of the particular variable. The variables in a pattern all belong to $x_{0}$. Consider the example from Section \ref{example1}, an example of a pattern would be $\{(x_2,1),(x_3,0)\}$. The values of the variables in the pattern match the values in extreme points 3, 7, and 11 in the example. We say that the pattern matches these extreme points (an extreme point might include more variables than a matching pattern).

We define $\mathcal{X}_{ij}$ as the set
of binary variables from $x_{0}$ that are shared between the constraints
of $\mathcal{S}_{i}$ and $\mathcal{S}_{j}$. In the example from
Section \ref{example1} we would have $\mathcal{X}_{1,2}=\{x_{2},x_{3}\}$,
which in this case is equal to $x_{0}$ but in general does not
have to be. We define $Q_{ij}$ as the indices of the patterns between block $i$ and $j$ that use \textbf{all} variables from $\mathcal{X}_{ij}$.
Furthermore, we define $\bar{y}_{qp}^{i}$ as 1 if pattern $q\in Q_{ij}$ matches extreme point $p$ from $\mathcal{P}_{i}$ and 0 otherwise. With this we can state
\begin{proposition}\label{prop:linkingCuts}
\label{prop:LinkingCuts}The consistency cuts
\begin{alignat}{2}
\sum_{p\in\mathcal{P}_{i}}\lambda_{ip}\bar{y}_{qp}^{i}=\sum_{p\in\mathcal{P}_{j}}\lambda_{jp}\bar{y}_{qp}^{j} & \quad & \forall i,j\in\{1,\ldots,k\},i<j,q\in Q_{ij}\label{eq:sharingcuts}
\end{alignat}
can be added to \emph{($\mathsf{DW2}$)} without changing the set of feasible solutions
\end{proposition}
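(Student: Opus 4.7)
The plan is to verify that (\ref{eq:sharingcuts}) is a valid equality for every integer-feasible solution to ($\mathsf{DW2}$); since valid inequalities never cut off feasible points, this will suffice. I would begin by fixing such a feasible $\lambda^*$ and considering a single shared binary variable $x_a \in \mathcal{X}_{ij}$. Tracking the value it takes through each block, define $v^i_a := \sum_{p \in \mathcal{P}_i} \lambda^*_{ip} \bar{x}^i_{0p,a}$; the variable linking constraints (\ref{eq:VS3_split}) give $v^i_a = v^j_a =: v_a$ for every pair of blocks that contain $x_a$, and the integrality constraint (\ref{eq:VS3_dom1}), propagated through (\ref{eq:VS3_split}), forces $v_a \in \mathbb{Z}$.

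The crucial observation is that since each variable in $\mathcal{X}_{ij}$ is binary, the coordinate $\bar{x}^i_{0p,a}$ at any extreme point $p$ of $X_i$ lies in $\{0,1\}$, while the weights $\lambda^*_{ip}$ form a convex combination summing to $1$. Combined with $v_a \in \mathbb{Z}$, this forces $v_a \in \{0,1\}$, and moreover all of the $\lambda^*_{ip}$-weight must be concentrated on extreme points whose $x_a$-coordinate equals $v_a$: that is, $\sum_{p:\,\bar{x}^i_{0p,a} \neq v_a} \lambda^*_{ip} = 0$. I would then apply this argument to every variable in $\mathcal{X}_{ij}$ simultaneously to obtain, for each block, a single distinguished pattern $q^* \in Q_{ij}$ given by $\{(x_a, v_a)\}_{a \in \mathcal{X}_{ij}}$ that carries the entire unit of weight, while every other pattern $q \in Q_{ij}$ receives weight $0$. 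Because the $v_a$ agree across blocks by (\ref{eq:VS3_split}), this distinguished pattern $q^*$ is the same when viewed from block $i$ or block $j$.

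Consequently, for every $q \in Q_{ij}$ both sides of (\ref{eq:sharingcuts}) equal $1$ when $q = q^*$ and $0$ otherwise, so the equality holds. The main subtlety to track carefully is this intersection step: one must conclude that the $\lambda^*$-weight is concentrated on extreme points matching the realized value of \emph{every} shared variable at once, rather than one variable at a time. This follows because ``all weight on points matching variable $a$'' and ``all weight on points matching variable $b$'' jointly imply ``all weight on points matching both,'' which is precisely the definition of matching the complete pattern $q^*$. Once this bookkeeping is in place, the proof reduces to reading off that exactly one pattern in $Q_{ij}$ is active on each side of the cut, with the same active pattern on both sides.
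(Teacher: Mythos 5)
Your proof is correct, but it takes a genuinely different route from the paper's. You argue directly in $\lambda$-space: any feasible solution to ($\mathsf{DW2}$) must, by the variable linking constraints \eqref{eq:VS3_split} and the integrality requirement \eqref{eq:VS3_dom1}, assign each shared binary variable a common integral value $v_a\in\{0,1\}$ across blocks, which forces all convex weight onto extreme points matching $(x_a,v_a)$; intersecting over $a\in\mathcal{X}_{ij}$ concentrates the full unit of weight on a single pattern $q^*$, identical in blocks $i$ and $j$, so both sides of \eqref{eq:sharingcuts} are the indicator of $q=q^*$. The one presentational caveat is that \eqref{eq:VS3_dom1} constrains the block-1 copy of $x_0$; if a shared variable does not appear in block 1, the argument still works but via whichever copy carries the integrality constraint after substitution. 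The paper instead follows the standard recipe for non-robust cuts: it augments the original IP with an auxiliary binary variable $y_q$ and the defining constraints \eqref{eq:sharing_y1}--\eqref{eq:sharing_y2}, places a copy of these in both blocks so that $y_q$ becomes a linking variable, and reads off \eqref{eq:sharingcuts} as the resulting variable linking constraint of the reformulated problem. Your argument is more elementary and self-contained, and it yields the stronger structural fact that exactly one pattern is active in any integer-feasible solution. The paper's construction buys something your proof does not: it produces, as a by-product, the exact subproblem modifications ($y_q$ and its defining constraints) needed to price over the augmented columns, which is precisely the machinery used later in \eqref{eq:SUB1_obj}--\eqref{eq:sharing_y2_2}.
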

\noindent
In (\ref{eq:sharingcuts}), we fix $j$ to be larger than $i$ to avoid duplicate constraints. Intuitively the cuts ensure that
the weight with which pattern $q$ is used in the extreme points of
$X_{i}$ is the same as in the extreme points of $X_{j}$. As
stated, the equalities do not change the set of feasible solutions to ($\mathsf{DW2}$). They can, however, restrict the solution space of ($\mathsf{DW3}$) and therefore act as valid inequalities.
The consistency cuts are written as equalities,
while a cut (valid inequality) would be expressed as an inequality.
Obviously, constraints~\eqref{eq:sharingcuts} can each be written as two inequalities, at most one of which can be violated for a given LP solution.

\begin{proof}
Given an arbitrary $i,j\in\{1,\ldots,k\},i<j$
and $q\in Q_{ij}$ we show that the corresponding equality \eqref{eq:sharingcuts}
is valid. We do so by making an addition to the original problem ($\mathsf{IP}$)
that does not change the solution space and show that when applying 
Dantzig-Wolfe reformulation to the modified problem the desired constraint appears in the master problem. This follows an
interpretation of non-robust cuts presented
in Section 4.2 of \citep{desaulniers2011cutting}.

 We will refer to the individual variables in the $x_{0}$
vector as $x_{0}^{l}$ for $l\in\{1,\ldots,n_{0}\}$. We can divide
the variables in the pattern into two sets; $\alpha_{q}$ (resp. $\beta_{q}$)
are the variable indices of $x_{0}$ that have a value of 1 (resp.
0) in pattern $q$. We add the binary variable $y_{q}$ and the following
constraints to ($\mathsf{IP}$):
\begin{alignat}{1}
\sum_{l\in\alpha_{q}}x_{0}^{l}+\sum_{l\in\beta_{q}}\left(1-x_{0}^{l}\right)\geq(|\alpha_{q}|+|\beta_{q}|)y_{q}\label{eq:sharing_y1}\\
\sum_{l\in\alpha_{q}}x_{0}^{l}+\sum_{l\in\beta_{q}}\left(1-x_{0}^{l}\right)\leq(|\alpha_{q}|+|\beta_{q}|-1)+y_{q}\label{eq:sharing_y2}
\end{alignat}
The constraints ensure that $y_{q}$ is 1 if pattern $q$ is part
of the solution and 0 if it is not. Observe that these constraints
do not restrict the solution space of ($\mathsf{IP}$),
they simply define the value of the $y_{q}$ variable. When decomposing
the model we duplicate the constraints (\ref{eq:sharing_y1}) and
(\ref{eq:sharing_y2}) and include a copy of each in both $\mathcal{S}_{i}$
and $\mathcal{S}_{j}$. As a result, $y_{q}$ becomes a linking variable.We
can now extend (\ref{eq:convexhull}) with the $y_{q}$ variable:
\[
\widehat{X}_{iq}={\textstyle \operatorname{conv}}\left\{ \left(\begin{array}{c}
x_{0}\\
x_{i}\\
y_{q}
\end{array}\right)\in\mathbb{Z}^{n_{0}+n_{i}}\times\mathbb{B}\left|\begin{array}{c}
\left(\begin{array}{c}
x_{0}\\
x_{i}
\end{array}\right)\in\bar{X}_{i},\\
\sum_{l\in\alpha_{q}}x_{0}^{l}+\sum_{l\in\beta_{q}}\left(1-x_{0}^{l}\right)\geq(|\alpha_{q}|+|\beta_{q}|)y_{q},\\
\sum_{l\in\alpha_{q}}x_{0}^{l}+\sum_{l\in\beta_{q}}\left(1-x_{0}^{l}\right)\leq(|\alpha_{q}|+|\beta_{q}|-1)+y_{q}
\end{array}\right.\right\} 
\]
When applying Minkowski-Weyl's Theorem to the above we get
\[
\widehat{X}'_{iq}=\left\{ \left(\begin{array}{c}
x_{0}\\
x_{i}\\
y_{q}
\end{array}\right)\in\mathbb{R}^{n_{0}+n_{i}}\times[0;1]\left|\left(\begin{array}{c}
x_{0}\\
x_{i}\\
y_{q}
\end{array}\right)=\sum_{p\in\mathcal{P}_{i}}\lambda_{ip}\left(\begin{array}{c}
\bar{x}_{0p}^{i}\\
\bar{x}_{ip}\\
\bar{y}_{qp}^{i}
\end{array}\right),\sum_{p\in\mathcal{P}_{i}}\lambda_{ip}=1,\lambda_{ip}\geq0\ \forall p\in\mathcal{P}_{i}\right.\right\} 
\]
where $\left(\begin{array}{ccc}
\bar{x}_{0p}^{i} & \bar{x}_{ip} & \bar{y}_{qp}^{i}\end{array}\right)^{\intercal}$ for $p\in\mathcal{P}_{i}$ are the extreme points of $\widehat{X}_{iq}$.
Likewise ${X}_{j}$ is extended to $\widehat{X}_{jq}$ which has the
extreme points $\left(\begin{array}{ccc}
\bar{x}_{0p}^{j} & \bar{x}_{jp} & \bar{y}_{qp}^{j}\end{array}\right)^{\intercal}$ for $p\in\mathcal{P}_{j}$. Notice that because
$y_{q}$ is a linking variable just like $x_{0}$ earlier, we need
to add an extra $i$-index to identify from which $X_{i}$ it originates.
This leads to two new constraints in ($\mathsf{DW1}$):
\begin{alignat*}{1}
y_{q} & =\sum_{p\in\mathcal{P}_{i}}\lambda_{ip}\bar{y}_{qp}^{i}\\
y_{q} & =\sum_{p\in\mathcal{P}_{j}}\lambda_{jp}\bar{y}_{qp}^{j}
\end{alignat*}
Next, one of these constraints is removed by substitution of its right
hand side into the other constraint. This leaves us with
\begin{alignat*}{1}
\sum_{p\in\mathcal{P}_{i}}\lambda_{ip}\bar{y}_{qp}^{i} & =\sum_{p\in\mathcal{P}_{j}}\lambda_{jp}\bar{y}_{qp}^{j}
\end{alignat*}

\noindent which is exactly the constraint in (\ref{eq:sharingcuts}) that we wanted to show the validity of.

\end{proof}

\noindent The consistency cuts are inspired by the \emph{arc-quantity} constraints proposed by \citep{mingozzi2017exact}. The arc-quantity
constraints are defined on a single integer variable $x_{p}$ that is shared
between the constraints of two blocks $\mathcal{S}_{i}$ and $\mathcal{S}_{j}$.
An arc-quantity constraint is based on a particular value $\gamma$ that may
be assigned to $x_{p}$. The constraint ensures that the combined weight
of the extreme points of $\mathcal{S}_{i}$ that assign the value
$\gamma$ to $x_{p}$ matches the combined weight of the extreme points
of $\mathcal{S}_{j}$ that also assign the value $\gamma$ to $x_{p}$. In short, the arc-quantity constraints aim to make a single integer variable consistent across blocks, whereas the consistency cuts ensure consistency for sets of binary variables.

\subsection{Example}

\label{subsec:example}

We return to the example from Section \ref{example1}. The
example shows that equality \eqref{eq:sharingcuts} not only is valid, but that it
also is useful in eliminating fractional solutions to ($\mathsf{DW3}$). If we
look closer at the extreme points that are "used" in the solution,
we find two patterns for block 1, $\{(x_{2},0),(x_{3},0)\}$
and $\{(x_{2},1),(x_{3},1)\}$ and two different patterns
for block 2, $\{(x_{2},1),(x_{3},0)\}$ and $\{(x_{2},0),(x_{3},1)\}$.
Even though the solution satisfies (\ref{eq:MAS_split}), the same
patterns are not being used with the same weights for block 1 and
block 2. We attempt to remedy this by adding a consistency
cut on the $\{(x_{2},0),(x_{3},0)\}$ pattern. The updated
optimal LP solution is given in Table~\ref{tab:smallexample2}.

\begin{singlespace}
\begin{table}[H]
\centering %
\begin{tabular}{c|ccccc|ccccccc}
\hline 
 & \multicolumn{5}{c|}{Block 1} & \multicolumn{7}{c}{Block 2}\tabularnewline
\hline 
$\lambda$  & 0  & 1  & 0  & 0  & 0  & 0  & 0  & 0  & 1  & 0  & 0  & 0 \tabularnewline
$c$  & 0  & 3  & 2  & 2  & 4  & 0  & 0  & 0  & 3  & 0  & 3  & 3 \tabularnewline
\hline 
$\bar{x}_{1}$  & 0  & 1  & 0  & 0  & 0  & -  & -  & -  & -  & -  & -  & - \tabularnewline
$\bar{x}_{2}$  & 0  & 0  & 1  & 0  & 1  & 0  & 1  & 0  & 0  & 1  & 1  & 0 \tabularnewline
$\bar{x}_{3}$  & 0  & 0  & 0  & 1  & 1  & 0  & 0  & 1  & 0  & 1  & 0  & 1 \tabularnewline
$\bar{x}_{4}$  & -  & -  & -  & -  & -  & 0  & 0  & 0  & 1  & 0  & 1  & 1 \tabularnewline
$\bar{y}$  & 1  & 1  & 0  & 0  & 0  & 1  & 0  & 0  & 1  & 0  & 0  & 0 \tabularnewline
\hline 
\end{tabular}\caption{A small example with one consistency cut}
\label{tab:smallexample2} 
\end{table}

\end{singlespace}

\noindent The $\bar{y}$ row indicates whether or not the $\{(x_{2},0),(x_{3},0)\}$
pattern matches that extreme point, i.e. 1 means it does. The solution
above is optimal to the DW relaxation and it is integer, hence it
is also optimal to the original problem. The solution has an objective
value of 6 and shows that the bound was successfully improved by adding
the cut.
\subsection{Theoretical properties of consistency cuts}

Section~\ref{subsec:example} presents an example where the optimal solution to the DW relaxation with consistency cuts is also an optimal solution to the original problem. In the following, we prove that under certain conditions this is always the case:

\begin{theorem}
For any MIP, the optimal solution to the DW relaxation with all consistency cuts is an optimal solution to the MIP, if the following conditions are satisfied by the chosen decomposition:

\begin{enumerate}
  \item $\mathcal{S}_{0}=\emptyset$, i.e. the decomposition has no coupling constraints.
  \item All linking variables are binary.
  \item $\mathcal{X}_{ij}=\emptyset\ \forall\, i,j\in\{1,\ldots,k\}\bigl||i-j|\geq 2$
  \item $\bar{X}_{i}\,\forall\, i\in \{1,\ldots,k\}$ are bounded.
\end{enumerate}
\label{Theorem}
\end{theorem}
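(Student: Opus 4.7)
The plan is to show that any feasible $\lambda^*$ for ($\mathsf{DW3}$) augmented with all consistency cuts \eqref{eq:sharingcuts} lies in the convex hull of integer ($\mathsf{DW2}$) solutions, each of which corresponds to a feasible point of the original MIP. The decisive structural ingredient is condition~3, which forces the block-sharing graph to be (a disjoint union of) paths: block~$i$ exchanges linking variables only with blocks $i-1$ and $i+1$. Together with condition~1 (no coupling constraints), this means that each tuple $(p_1,\ldots,p_k)\in\mathcal{P}_1\times\cdots\times\mathcal{P}_k$ whose adjacent members agree on every shared variable uniquely determines a feasible integer point of ($\mathsf{IP}$), which opens the door to a Markov-chain-style decomposition of $\lambda^*$.

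Concretely, for each $i\in\{1,\ldots,k-1\}$ and each pattern $q\in Q_{i,i+1}$ I would first define
\[
W^{i,i+1}_q \;=\; \sum_{p\in\mathcal{P}_i}\lambda^*_{ip}\,\bar{y}^{\,i}_{qp}\;=\;\sum_{p\in\mathcal{P}_{i+1}}\lambda^*_{i+1,p}\,\bar{y}^{\,i+1}_{qp},
\]
the equality being precisely \eqref{eq:sharingcuts}. Because patterns in $Q_{i,i+1}$ use \emph{all} variables of $\mathcal{X}_{i,i+1}$, every extreme point matches exactly one such $q$, so $\sum_{q\in Q_{i,i+1}}W^{i,i+1}_q=1$ by the convexity constraints \eqref{eq:VS3_conv1}. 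I would then build the joint distribution
\[
\mu(p_1,\ldots,p_k) \;=\; \lambda^*_{1,p_1}\,\prod_{i=2}^{k}\frac{\lambda^*_{i,p_i}\,\mathbf{1}\!\left[\bar{y}^{\,i}_{q(p_{i-1}),\,p_i}=1\right]}{W^{i-1,i}_{q(p_{i-1})}},
\]
where $q(p_{i-1})\in Q_{i-1,i}$ is the unique pattern matched by $p_{i-1}$. A telescoping argument collapses the total sum from the inside out (each innermost sum over $p_j$ for fixed $p_{j-1}$ returns $W^{j-1,j}_{q(p_{j-1})}/W^{j-1,j}_{q(p_{j-1})}=1$), giving $\sum\mu=1$; a symmetric telescoping, splitting the product at an arbitrary index~$i$ and invoking the two sides of the consistency-cut equality on either side of the split, yields the marginal identity $\sum_{p_j,\,j\neq i}\mu=\lambda^*_{i,p_i}$ for every block~$i$.

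Every tuple in the support of $\mu$ agrees on each adjacent $\mathcal{X}_{i,i+1}$, and by condition~3 each linking variable belongs to exactly one such set, so pairwise agreement is global agreement and the tuple unambiguously assigns a value to every linking variable. Because $\mathcal{S}_0=\emptyset$ by condition~1, the induced point $x(p_1,\ldots,p_k)$ is feasible for ($\mathsf{IP}$). Linearity of \eqref{eq:VS3_obj} combined with the marginal identity then gives $\operatorname{obj}(\lambda^*)=\sum_{(p_1,\ldots,p_k)}\mu(p_1,\ldots,p_k)\,c\,x(p_1,\ldots,p_k)$, so at least one tuple yields an integer-feasible $x$ with $c\,x\leq\operatorname{obj}(\lambda^*)$; since the LP relaxation is a lower bound on the MIP optimum, this $x$ is optimal. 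Conditions~2 (binarity) and~4 (boundedness) enter implicitly to make the pattern sets finite and to make Minkowski--Weyl applicable.

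The main obstacle I anticipate is careful bookkeeping rather than a conceptual hurdle. Specifically I would need to verify (i)~that $\mu$ never divides by zero at a term whose numerator is nonzero (if $\lambda^*_{ip}>0$ and $\bar{y}^{\,i}_{qp}=1$ then $W^{i-1,i}_q\geq\lambda^*_{ip}>0$); (ii)~that the marginal-matching telescoping at a middle index is carried out by splitting the product at~$i$ and applying the two halves of the consistency-cut equality on either side; and (iii)~that the cost of every linking variable is accounted for exactly once in \eqref{eq:VS3_obj} and matches its value in $x(p_1,\ldots,p_k)$. Point~(iii) uses \eqref{eq:VS3_split} together with the choice of block~$1$ as the representative of $x_0$ in \eqref{eq:VS3_obj}, and may require a small additional argument for linking variables that do not appear in block~$1$'s constraints.
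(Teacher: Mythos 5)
Your argument is correct, but it takes a genuinely different route from the paper. The paper proves the theorem by first showing (via two redundancy lemmas) that under the consistency cuts all but one convexity constraint and all variable-linking constraints \eqref{eq:VS3_split} can be dropped, reducing ($\mathsf{DW4}$) to a system $\bar{A}\lambda=\bar{b}$, $\lambda\geq 0$ whose matrix $\bar{A}$ has a staircase structure with at most two nonzeros per column; it then invokes Corollary III.2.8 and Proposition III.2.3 of Wolsey to conclude $\bar{A}$ is totally unimodular and every basic solution is integer. You instead construct, for any feasible $\lambda^*$, an explicit probability measure $\mu$ on tuples of extreme points whose pairwise compatibility is enforced by the cuts and whose marginals recover $\lambda^*$ --- a Markov-chain (junction-tree along a path) gluing argument showing the cut-augmented polytope is contained in the convex hull of integer-feasible points. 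The telescoping computations are exactly where the two sides of each consistency cut are consumed, mirroring the role the cuts play in the paper's redundancy lemmas, and condition~3 is used in both proofs to guarantee the path structure (staircase matrix for the paper, well-defined chain factorization for you). Your approach is more constructive: it exhibits the integer solutions into which $\lambda^*$ decomposes, rather than appealing to TU machinery, and it more transparently explains \emph{why} the chain condition matters; the paper's approach is shorter once the lemmas are in place and yields the standard ``optimal basic solution is integer'' phrasing directly. The bookkeeping issues you flag are real but benign: the $0/0$ convention and the lower bound $W^{i-1,i}_{q(p_{i-1})}\geq\lambda^*_{i,p_i}$ handle degenerate denominators; blocks with $\mathcal{X}_{i,i+1}=\emptyset$ degenerate to an independent product; and for the objective you do not actually need \eqref{eq:VS3_split} --- whichever block's copy of a linking variable appears in \eqref{eq:VS3_obj}, the marginal identity at that block already equates its LP contribution with its $\mu$-expectation. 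Note also that your step ``every extreme point matches exactly one pattern in $Q_{i,i+1}$'' is where condition~2 (binarity) does its real work, beyond finiteness.
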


\noindent The third condition of Theorem \ref{Theorem} essentially means that the decomposition has to form a chain structure where $\mathcal{S}_{1}$ shares variables with $\mathcal{S}_{2}$ only, $\mathcal{S}_{2}$ with $\mathcal{S}_{1}$ and $\mathcal{S}_{3}$ only, \ldots, and $\mathcal{S}_{k}$ with $\mathcal{S}_{k-1}$ only. This would mean that the rows and columns of the constraint matrix could be reordered to form a \emph{staircase structure} \citep{fourer1984staircase}. 

Before proving Theorem \ref{Theorem} it is useful to establish some notation. We define $\mathcal{P}_{i}(q)$ as the indices of extreme points of $X_{i}$ which pattern $q$ matches. This allows us to express the consistency cuts as follows:

\begin{singlespace}
\begin{equation}
\sum_{p\in\mathcal{P}_{i}(q)}\lambda_{ip}=\sum_{p\in\mathcal{P}_{j}(q)}\lambda_{jp}\quad\forall i,j\in \{1,\ldots,k\},i<j,q\in Q_{ij}\label{eq:sharingcut_simple}
\end{equation}
\end{singlespace}

\noindent which is equivalent to (\ref{eq:sharingcuts}). Consider a general MIP of the form

\begin{singlespace}
$\,$

\noindent($\mathsf{MIP}$) \hspace{1.9in} $\mathrm{min}\{cx|Ax\leq b,x\in\dot{X}\}$

$\,$
\end{singlespace}

\noindent Where $\dot{X}$ can include both binary, integer and continuous variables. We decompose ($\mathsf{MIP}$) into $\mathcal{S}_{i}$ for $i\in\{0,\ldots,k\}$. After substitution we can compare the result to ($\mathsf{DW2}$). Since there are no constraints left in $\mathcal{S}_{0}$ (the first condition of Theorem \ref{Theorem}) constraints (\ref{eq:VS3_set1}) are not present. After relaxation and after adding the consistency cuts (\ref{eq:sharingcut_simple}) we are left with


\begin{singlespace}
$\,$

\noindent($\mathsf{DW4}$)

\begin{minipage}{\textwidth-\parindent}

\begin{equation*}
\mathrm{min}\:c_{0}\sum_{p\in\mathcal{P}_{1}}\lambda_{1p}\bar{x}_{0p}^{1}+\sum_{i=1}^{k}c_{i}\sum_{p\in\mathcal{P}_{i}}\lambda_{ip}\bar{x}_{ip}
\end{equation*}

subject to

\begin{alignat}{2}
\sum_{p\in\mathcal{P}_{i}(q)}\lambda_{ip}-\sum_{p\in\mathcal{P}_{j}(q)}\lambda_{jp} & =0 & \quad & \forall i,j\in\{1,\ldots,k\},i<j,q\in Q_{ij}\label{eq:sharingcut_simple2}\\
\sum_{p\in\mathcal{P}_{i}}\lambda_{ip}\bar{x}_{0p}^{i}-\sum_{p\in\mathcal{P}_{i+1}}\lambda_{i+1,p}\bar{x}_{0p}^{i+1} & =0 & \quad & \forall i\in\{1,\ldots,k-1\}\label{eq:VS3_split2}\\
\sum_{p\in\mathcal{P}_{i}}\lambda_{ip} & =1 & \quad & \forall i\in\{1,\ldots,k\}\label{eq:VS3_conv1_2}\\
\lambda_{ip} & \geq0 & \quad & \forall p\in\mathcal{P}_{i},i\in\{1,\ldots,k\}\label{eq:VS3_conv2_2}
\end{alignat}

\end{minipage}

$\,$
\end{singlespace}

\noindent Notice that (\ref{eq:VS3_split2}) is slightly different from (\ref{eq:VS3_split}), but they define the same solution space. Rather than comparing all linking variable copies to the copies from the first block, we compare the copies from block 1 to those of block 2, 2 to 3, etc. To prove Theorem \ref{Theorem} we establish that ($\mathsf{DW4}$) is equivalent to a smaller LP and that the constraint matrix of the smaller LP is totally unimodular. This means that any basic solution to the smaller LP is naturally integer. 
Of the following lemmas, Lemma \ref{WeJustNeedOneConv} states that all but one of the constraints (\ref{eq:VS3_conv1_2}) are redundant and Lemma \ref{WeDoNotNeedVarSplitSingleVar} states that all constraints ({\ref{eq:VS3_split}}) are redundant while Lemma \ref{SumLemma} is  used to prove Lemma \ref{WeJustNeedOneConv}.

\begin{lemma}
Let $i,j\in\{1,\ldots,k\}$. Assume $i\neq j$ and that $\mathcal{X}_{ij}\neq\emptyset$ then\label{SumLemma}
\end{lemma}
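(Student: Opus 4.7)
The plan is to prove what the statement almost certainly asserts from context: under the hypotheses, the consistency cuts force
\[
\sum_{p\in\mathcal{P}_{i}}\lambda_{ip}\;=\;\sum_{p\in\mathcal{P}_{j}}\lambda_{jp}.
\]
This is the natural ``aggregate'' consequence of \eqref{eq:sharingcut_simple} which is exactly what is needed in order to propagate one convexity constraint to the others and make Lemma \ref{WeJustNeedOneConv} work.

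The key structural observation I would start with is combinatorial. Because every variable in $\mathcal{X}_{ij}$ is binary and every pattern $q\in Q_{ij}$ assigns a value to \emph{every} variable in $\mathcal{X}_{ij}$, the set $Q_{ij}$ is in bijection with $\{0,1\}^{|\mathcal{X}_{ij}|}$. Any extreme point $p\in\mathcal{P}_{i}$ specifies a 0/1 value for each variable of $\mathcal{X}_{ij}\subseteq x_{0}$ in its $\bar{x}_{0p}^{i}$ component, and therefore matches \emph{exactly one} pattern in $Q_{ij}$. Consequently $\{\mathcal{P}_{i}(q)\}_{q\in Q_{ij}}$ is a partition of $\mathcal{P}_{i}$, and the analogous statement holds for $\mathcal{P}_{j}$. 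This is the only step with any content; the rest is bookkeeping.

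Given the partition, I would then sum the consistency cuts \eqref{eq:sharingcut_simple} over all $q\in Q_{ij}$:
\[
\sum_{p\in\mathcal{P}_{i}}\lambda_{ip}
\;=\;\sum_{q\in Q_{ij}}\sum_{p\in\mathcal{P}_{i}(q)}\lambda_{ip}
\;=\;\sum_{q\in Q_{ij}}\sum_{p\in\mathcal{P}_{j}(q)}\lambda_{jp}
\;=\;\sum_{p\in\mathcal{P}_{j}}\lambda_{jp},
\]
where the first and last equalities use the partition observation and the middle equality applies \eqref{eq:sharingcut_simple} term by term. The assumption $\mathcal{X}_{ij}\neq\emptyset$ is needed exactly to guarantee that $Q_{ij}$ is nonempty (otherwise there are no cuts to sum), and $i\neq j$ is needed so that the cuts relate two distinct blocks.

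The ``main obstacle'' is really just making the partition argument airtight: one has to be clear that $Q_{ij}$ consists of \emph{complete} patterns on $\mathcal{X}_{ij}$ (not subsets), that each extreme point carries integer values for the binary linking variables (so it genuinely realizes one specific 0/1 assignment on $\mathcal{X}_{ij}$), and hence lies in exactly one $\mathcal{P}_{i}(q)$. Once this is spelled out, nothing else is needed; the lemma is immediate from summation of \eqref{eq:sharingcut_simple}.
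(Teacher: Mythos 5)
Your proposal is correct, and for the statement actually being proved it coincides with the paper's argument: the lemma's claim is just the displayed identity $\sum_{p\in\mathcal{P}_{i}}\lambda_{ip}=\sum_{q\in Q_{ij}}\sum_{p\in\mathcal{P}_{i}(q)}\lambda_{ip}$, i.e.\ precisely your ``key structural observation'' that each extreme point realizes exactly one complete $0/1$ assignment on $\mathcal{X}_{ij}$ and hence the sets $\mathcal{P}_{i}(q)$ for $q\in Q_{ij}$ partition $\mathcal{P}_{i}$, which is word-for-word how the paper proves it. The additional step you take---summing the consistency cuts over $q\in Q_{ij}$ to conclude $\sum_{p\in\mathcal{P}_{i}}\lambda_{ip}=\sum_{p\in\mathcal{P}_{j}}\lambda_{jp}$---is also correct, but it is not part of this lemma; it is exactly the substitution chain the paper carries out in the subsequent Lemma~\ref{WeJustNeedOneConv}, which cites this one.
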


\begin{singlespace}
\[
\sum_{p\in\mathcal{P}_{i}}\lambda_{ip}=\sum_{q\in Q_{ij}}\sum_{p\in\mathcal{P}_{i}(q)}\lambda_{ip}
\]
\end{singlespace}

\begin{proof}
Every index in $\mathcal{P}_{i}$ corresponds to an extreme point from $X_{i}$. Such an extreme point will match exactly one pattern from $Q_{ij}$.
This means that every $p$ from $\mathcal{P}_{i}$ is going to occur exactly once in the summation on the right-hand side and thereby we are summing over exactly the same variables on the left and right-hand side.
\end{proof}

\begin{lemma}
Constraints (\ref{eq:VS3_conv1_2}) can be removed from \emph{($\mathsf{DW4}$)} for every value of $i$ other than 1 without changing the solution space.\label{WeJustNeedOneConv}
\end{lemma}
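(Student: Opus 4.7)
The plan is to prove the lemma by induction on $i$, establishing the convexity constraint $\sum_{p\in\mathcal{P}_{i}}\lambda_{ip}=1$ for each $i\in\{2,\ldots,k\}$ from the retained convexity constraint at $i=1$ together with the consistency cuts (\ref{eq:sharingcut_simple2}), propagating the equality along the chain of adjacent blocks guaranteed by condition~3 of Theorem~\ref{Theorem}. The base case $i=1$ holds trivially because that constraint is still present in (\ref{eq:VS3_conv1_2}).

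For the inductive step from $i$ to $i+1$, I would start from the inductive hypothesis $\sum_{p\in\mathcal{P}_{i}}\lambda_{ip}=1$ and apply Lemma~\ref{SumLemma} to rewrite the left-hand side as the double sum $\sum_{q\in Q_{i,i+1}}\sum_{p\in\mathcal{P}_{i}(q)}\lambda_{ip}$. Next, I would use each consistency cut (\ref{eq:sharingcut_simple2}) for $q\in Q_{i,i+1}$ to replace the inner sum $\sum_{p\in\mathcal{P}_{i}(q)}\lambda_{ip}$ by $\sum_{p\in\mathcal{P}_{i+1}(q)}\lambda_{i+1,p}$. A second application of Lemma~\ref{SumLemma}, now to block $i+1$ with the same shared-variable set $\mathcal{X}_{i,i+1}=\mathcal{X}_{i+1,i}$, then collapses the double sum into $\sum_{p\in\mathcal{P}_{i+1}}\lambda_{i+1,p}$, yielding the required convexity constraint for block $i+1$.

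The main obstacle I anticipate is the edge case $\mathcal{X}_{i,i+1}=\emptyset$ for some $i$, which is not explicitly excluded by the hypotheses of Theorem~\ref{Theorem}: in that situation $Q_{i,i+1}$ is empty, no consistency cuts link the two blocks, and Lemma~\ref{SumLemma} cannot be invoked, so the induction as written stalls. I plan to handle this by observing that, together with condition~3 of Theorem~\ref{Theorem}, an empty $\mathcal{X}_{i,i+1}$ implies that blocks $1,\ldots,i$ share no linking variables with blocks $i+1,\ldots,k$, so the problem splits into independent sub-instances and the inductive argument can be restarted separately on each connected component of the chain (each coming with its own retained convexity constraint from (\ref{eq:VS3_conv1_2})). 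With this caveat handled, the induction delivers the lemma in full generality.
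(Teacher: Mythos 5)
Your induction---base case from the retained constraint for $i=1$, inductive step via two applications of Lemma~\ref{SumLemma} with the consistency cuts (\ref{eq:sharingcut_simple2}) for $q\in Q_{i,i+1}$ substituted between them---is exactly the paper's proof. Your edge case $\mathcal{X}_{i,i+1}=\emptyset$ is a genuine observation the paper passes over silently (Lemma~\ref{SumLemma} is stated only for $\mathcal{X}_{ij}\neq\emptyset$, and the paper elsewhere concedes that consecutive blocks may share no variables), though note that your fix of restarting the induction on each connected component implicitly retains one convexity constraint per component rather than only the one for $i=1$, i.e.\ it proves a slightly weaker removal claim than the lemma as literally stated.
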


\begin{proof}
Assume that 
\[
\sum_{p\in\mathcal{P}_{i}}\lambda_{ip}=1
\]
holds for an $i\in\{1,\ldots,k-1\}$. Then the following series of equations show that it also holds for $i+1$.

\begin{singlespace}
\begin{align*}
1 & =\sum_{p\in\mathcal{P}_{i}}\lambda_{ip}\overset{(a)}{=}\sum_{q\in Q_{i.i+1}}\sum_{p\in\mathcal{P}_{i}(q)}\lambda_{ip}\overset{(b)}{=}\sum_{q\in Q_{i.i+1}}\sum_{p\in\mathcal{P}_{i+1}(q)}\lambda_{i+1p}\overset{(c)}{=}\sum_{p\in\mathcal{P}_{i+1}}\lambda_{i+1p}
\end{align*}
\end{singlespace}

\noindent (a) follows directly from Lemma \ref{SumLemma}. Since all consistency cuts for $q\in Q_{i,i+1}$ are assumed to be present, (b) is simply a substitution based on the consistency cuts (\ref{eq:sharingcut_simple2}) for all $q\in Q_{i,i+1}$. Finally, (c) follows from Lemma \ref{SumLemma} after realizing that $Q_{i,i+1}=Q_{i+1,i}$, which follows directly from the definition of these. Since constraints (\ref{eq:VS3_conv1_2}) were not removed for $i=1$, we now have the base case and the induction step and can prove by induction that the constraints hold for any value of $i\in\{1,\ldots,k\}$.
\end{proof}

\begin{lemma}
The variable linking constraints (\ref{eq:VS3_split}) can be removed from \emph{($\mathsf{DW4}$)} without changing the solution space.\label{WeDoNotNeedVarSplitSingleVar}
\end{lemma}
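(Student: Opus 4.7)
The plan is to show that each variable linking constraint in (\ref{eq:VS3_split2}) is implied component-wise by the consistency cuts (\ref{eq:sharingcut_simple2}). Once this is established, the constraint is redundant as a whole and can be removed without changing the solution space of ($\mathsf{DW4}$).

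First, I would fix $i \in \{1,\ldots,k-1\}$ and look at the individual component of (\ref{eq:VS3_split2}) corresponding to some linking variable index $l$. By condition~3 of Theorem~\ref{Theorem}, a linking variable appearing in both blocks $i$ and $i+1$ must lie in $\mathcal{X}_{i,i+1}$ (it cannot also appear in a non-adjacent block, so nothing else constrains it). By condition~2, $x_0^l$ is binary, hence $\bar{x}_{0p}^{i,l} \in \{0,1\}$ for every $p \in \mathcal{P}_i$, and similarly for $p \in \mathcal{P}_{i+1}$. If instead $x_0^l$ appears in only one of the two blocks, then there is no corresponding row of (\ref{eq:VS3_split2}) between them and nothing needs to be shown for that component.

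Next, I would use the partition argument underlying Lemma~\ref{SumLemma}: every extreme point in $\mathcal{P}_i$ matches exactly one pattern $q \in Q_{i,i+1}$, and this pattern determines the value taken by $x_0^l$ on that extreme point. Letting $Q^{l}_{i,i+1} \subseteq Q_{i,i+1}$ be the patterns that assign the value $1$ to $x_0^l$, we can rewrite
\begin{align*}
\sum_{p \in \mathcal{P}_i} \lambda_{ip}\, \bar{x}_{0p}^{i,l} \;=\; \sum_{q \in Q^{l}_{i,i+1}} \sum_{p \in \mathcal{P}_i(q)} \lambda_{ip} \;=\; \sum_{q \in Q^{l}_{i,i+1}} \sum_{p \in \mathcal{P}_{i+1}(q)} \lambda_{i+1,p} \;=\; \sum_{p \in \mathcal{P}_{i+1}} \lambda_{i+1,p}\, \bar{x}_{0p}^{i+1,l},
\end{align*}
where the middle equality applies the consistency cut (\ref{eq:sharingcut_simple2}) for each $q \in Q^{l}_{i,i+1}$, and the outer equalities repeat the pattern-partition step (a variant of Lemma~\ref{SumLemma} restricted to patterns of the chosen form). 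This is exactly the $l$-th component of the variable linking constraint between blocks $i$ and $i+1$.

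Ranging $l$ over all linking variables common to the two blocks shows the entire constraint (\ref{eq:VS3_split2}) is implied, and ranging $i$ over $\{1,\ldots,k-1\}$ gives the lemma. The main subtlety I expect is the bookkeeping in the pattern-partition step: one has to be sure that the patterns in $Q_{i,i+1}$ specify values for \emph{all} of $\mathcal{X}_{i,i+1}$ (which is built into the definition of $Q_{i,i+1}$), and that binarity is what lets a single binary component $\bar{x}_{0p}^{i,l}$ be recovered cleanly from the pattern indicator. Both points are handled by conditions~2 and~3 of Theorem~\ref{Theorem}, so the argument is essentially a careful accounting exercise rather than a deep step.
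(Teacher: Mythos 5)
Your proof is correct and follows essentially the same route as the paper's: write the variable linking constraint componentwise, use binarity to replace $\sum_{p}\lambda_{ip}\bar{x}_{0p}^{il}$ by a sum over the extreme points where the $l$-th variable equals one, partition those extreme points by the patterns in $Q_{i,i+1}$ assigning value $1$ to that variable (your $Q^{l}_{i,i+1}$ is the paper's $Q_{i,i+1}(l)$), apply the consistency cut for each such pattern, and re-aggregate on the block-$(i+1)$ side. The bookkeeping points you flag (patterns covering all of $\mathcal{X}_{i,i+1}$, binarity enabling the indicator rewriting) are exactly the ones the paper relies on.
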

\begin{proof} 
Recall that $\bar{x}_{0p}^{i}$ is a vector, we denote
the $l^{th}$ element of this vector $\bar{x}_{0p}^{il}$.
This allows us to write (\ref{eq:VS3_split}) in the following way:
\begin{singlespace}
\begin{alignat*}{2}
\sum_{p\in\mathcal{P}_{i}}\lambda_{ip}\bar{x}_{0p}^{il}-\sum_{p\in\mathcal{P}_{i+1}}\lambda_{i+1.p}\bar{x}_{0p}^{i+1.l} & =0 & \quad & \forall i\in\{1,\ldots,k-1\},l\in\mathcal{X}_{i,i+1}
\end{alignat*}
\end{singlespace}
We abuse notation slightly in the equation above. $\mathcal{X}_{i,j}$ is actually a set of variables, but here we use it as a set of variable indices.
We look at an arbitrary $i\in\{1,\ldots,k-1\}$ and $l\in\mathcal{X}_{i,i+1}$.
Let $Q_{i,i+1}(l)$ be the patterns from $Q_{i,i+1}$ that assigns
value 1 to the $l^{th}$ variable from $x_{o}$. Then
\begin{singlespace}
\begin{align*}
\sum_{p\in\mathcal{P}_{i}}\lambda_{ip}\bar{x}_{0p}^{il}\overset{(a)}{=}\sum_{p\in\mathcal{P}_{i}|\bar{x}_{0p}^{il}=1}\lambda_{ip}\overset{(b)}{=}\sum_{\begin{array}{c}
q\in Q_{i,i+1}(l)\end{array}}\sum_{\mathcal{P}_{i}(q)}\lambda_{ip}\overset{(c)}{=}\sum_{\begin{array}{c}
q\in Q_{i,i+1}(l)\end{array}}\sum_{\mathcal{P}_{i+1}(q)}\lambda_{ip}\\
\overset{(d)}{=}\sum_{p\in\mathcal{P}_{i+1}|\bar{x}_{0p}^{i+1,l}=1}\lambda_{i+1,p}\overset{(e)}{=}\sum_{p\in\mathcal{P}_{i+1}}\lambda_{i+1,p}\bar{x}_{0p}^{i+1,l}
\end{align*}
\end{singlespace}
\noindent (a) holds because the $x$-variables are binary (the second
condition of Theorem \ref{Theorem}). (b) follows from the definition
of $Q_{i,i+1}(l)$ and $\mathcal{P}_{i}(q)$. (c) follows directly
from the definition of the consistency cuts. (d) is
similar to (b) and (e) is similar to (a). We could do the same for
any other  $i\in\{1,\ldots,k-1\}$ and $l\in\mathcal{X}_{i,i+1}$, this
proves the lemma. \end{proof}

\noindent Applying Lemma \ref{WeJustNeedOneConv} and Lemma \ref{WeDoNotNeedVarSplitSingleVar} to ($\mathsf{DW4}$) reduces the formulation to 

\begin{singlespace}
$\,$

\noindent($\mathsf{DW5}$)

\begin{minipage}{\textwidth-\parindent}

\begin{equation*}
\mathrm{min}\:c_{0}\sum_{p\in\mathcal{P}_{1}}\lambda_{1p}\bar{x}_{0p}^{1}+\sum_{i=1}^{k}c_{i}\sum_{p\in\mathcal{P}_{i}}\lambda_{ip}\bar{x}_{ip}
\end{equation*}

\begin{alignat}{2}
\sum_{p\in\mathcal{P}_{i}}\lambda_{ip}\bar{y}_{qp}^{i}-\sum_{p\in\mathcal{P}_{j}}\lambda_{jp}\bar{y}_{qp}^{j} & =0 & \quad & \forall i,j\in\{1,\ldots,k\},i<j,q\in Q_{ij}\label{eq:sharingcuts2}\\
\sum_{p\in\mathcal{P}_{1}}\lambda_{1p} & =1 \label{simpleConv}\\
\lambda_{ip} & \geq0 & \quad & \forall p\in\mathcal{P}_{i},i\in\{1,\ldots,k\}\label{eq:VS3_conv2_3}
\end{alignat}

\end{minipage}

$\,$
\end{singlespace}

\noindent Notice that we have replaced (\ref{eq:sharingcut_simple2}) with its original form (\ref{eq:sharingcuts}), ((\ref{eq:sharingcuts2}) in the model above). These are freely interchangeable. We could also write (\ref{eq:sharingcuts2})-(\ref{eq:VS3_conv2_3}) as follows

\begin{singlespace}
\begin{alignat}{1}
\bar{A}\lambda & =\bar{b}\label{eq:DW6_cons}\\
\lambda & \geq0\label{eq:DW6_dom}
\end{alignat}
\end{singlespace}

\noindent with the appropriate definitions of $\bar{A}$ and $\bar{b}$. We continue by proving that $\bar{A}$ is \emph{totally unimodular} (TU), and therefore show that the reformulation has integer extreme points (see Proposition III.2.3). For this the following corollary is necessary.

\begin{corollary}\textbf{III.2.8}
\citep{wolsey1988integer} Let A be a (0, 1, -1) matrix with no more than two nonzero elements in each column. Then A is TU if and only if the rows of A can be partitioned into two subsets $\mathcal{Q}_1$ and $\mathcal{Q}_2$ such that if a column contains two nonzero elements, the following statements are true:

\begin{enumerate}
  \item If both nonzero elements have the same sign, then one is in a row contained in $\mathcal{Q}_1$ and the other is in a row contained in $\mathcal{Q}_2$.
  \item If the two nonzero elements have the opposite sign, then both are in rows contained in the same subset.
\end{enumerate}
\end{corollary}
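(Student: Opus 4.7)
My plan is to invoke the Ghouila--Houri characterization of total unimodularity: a $(0,\pm 1)$ matrix $A$ is TU if and only if, for every subset $R$ of its rows, there exist signs $\epsilon_i\in\{-1,+1\}$ for $i\in R$ such that $\sum_{i\in R}\epsilon_i a_{ij}\in\{-1,0,1\}$ for every column $j$. I would also view the two-nonzero column structure of $A$ through a signed graph $G$ whose vertices are the rows of $A$ and whose edges come from columns with exactly two nonzeros, labelling such an edge \emph{odd} when the two nonzeros share a sign and \emph{even} when they differ. The partition $(\mathcal{Q}_1,\mathcal{Q}_2)$ demanded by the corollary is then exactly a 2-colouring of $G$ in which odd edges join bichromatic vertices and even edges join monochromatic vertices; by a standard signed-graph balance argument, such a colouring exists if and only if every cycle of $G$ contains an even number of odd edges.

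For the ``if'' direction, assume the partition exists and set $\epsilon_i=+1$ for $i\in\mathcal{Q}_1$ and $\epsilon_i=-1$ for $i\in\mathcal{Q}_2$. This single global assignment, restricted to any row subset $R$, certifies the Ghouila--Houri condition: a column with at most one nonzero in $R$ contributes a value in $\{-1,0,+1\}$ automatically; a column whose two nonzeros $a_{ij},a_{kj}$ are both in $R$ and share a sign has $i,k$ in opposite parts by condition~1, so $\epsilon_i a_{ij}+\epsilon_k a_{kj}=0$; and a column with two opposite-sign nonzeros in $R$ has $i,k$ in the same part by condition~2, so again $\epsilon_i a_{ij}+\epsilon_k a_{kj}=0$. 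Thus $A$ is TU.

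For the ``only if'' direction, I contrapose: assuming no valid partition exists, I produce a square submatrix of determinant $\pm 2$, contradicting TU. Non-existence of the partition forces $G$ to contain a cycle $C$ with an odd number of odd edges; pick $C$ of minimum length $\ell$ and let $M$ be the $\ell\times\ell$ submatrix of $A$ obtained by restricting to the $\ell$ rows traversed by $C$ and the $\ell$ columns corresponding to its edges. Each column of $M$ has exactly two nonzeros $a_i, b_i\in\{\pm 1\}$ (one per endpoint), and each row also has exactly two nonzeros. Expanding $\det M$ as a signed sum over perfect row-to-column matchings collapses to the two natural cyclic matchings, giving $\det M=\pm(\prod_i a_i\pm\prod_i b_i)$; an elementary sign calculation then yields $|\det M|=2$, the desired contradiction.

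The main technical obstacle is this determinant computation. Non-singularity is the easy part: propagating a hypothetical nonzero $x\in\ker M^{\mathsf T}$ around the cycle starting from $x_{v_1}=1$ forces $x_{v_{i+1}}=-a_i x_{v_i}/b_i$, and cycle closure reduces to the parity condition $(-1)^{\#\text{even edges}}=(-1)^\ell$, which fails exactly when the number of odd edges is odd. Upgrading non-singularity to $|\det M|=2$ requires the sign bookkeeping for the two cyclic matchings sketched above; alternatively, one can perform row and column sign flips (which preserve both $|\det|$ and total unimodularity) to normalize $M$ to the unsigned vertex--edge incidence matrix of an odd cycle and invoke the classical fact that this incidence matrix has determinant $\pm 2$. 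Once either version of this lemma is secured, the two implications combine to give the corollary.
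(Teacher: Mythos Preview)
The paper does not prove this statement: Corollary~III.2.8 is quoted verbatim from \citep{wolsey1988integer} and used as a black box in the proof of Theorem~\ref{Theorem}. There is therefore no ``paper's own proof'' to compare against.

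That said, your argument is a correct and standard one. The ``if'' direction via Ghouila--Houri with the single global sign assignment $\epsilon_i=\pm 1$ according to $\mathcal{Q}_1/\mathcal{Q}_2$ is exactly right; the case analysis for columns with two nonzeros in $R$ shows the signed row-sum vanishes in both cases. For the ``only if'' direction, your signed-graph/balance reformulation is the natural one, and the determinant computation goes through: with the two cyclic matchings one gets $\det M=\prod_i a_i\bigl(1+(-1)^{o-1}\bigr)$ where $o$ is the number of odd edges, hence $|\det M|=2$ precisely when $o$ is odd. The one point you gloss over is why choosing $C$ of minimum length guarantees that the $\ell$ rows of $M$ are distinct (i.e.\ that $C$ is a simple cycle); this follows because any closed walk with an odd number of odd edges that repeats a vertex decomposes into two shorter closed walks whose odd-edge counts sum to an odd number, so one of them is shorter and still has odd parity. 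With that remark the argument is complete.
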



\noindent We define, for each $i,j\in\{1,\ldots,k\}$ where $i\neq j$, $\hat{y}_{ij}$ as the matrix of the $\bar{y}_{qp}^{i}$ parameters, where every row corresponds to a $q\in Q_{ij}$ (recall $Q_{ij}=Q_{ji}$) and every column corresponds to a $p\in\mathcal{P}_i$. Because of the structure specified by the third condition of Theorem \ref{Theorem}, we can represent $\bar{A}$ in the following way

\begin{singlespace}
\begin{equation}
\begin{aligned}\bar{A}= & \left[\begin{array}{cccccc}
\hat{y}_{1,2} & -\hat{y}_{2,1}\\
& \hat{y}_{2,3} & -\hat{y}_{3,2}\\
&  & \hat{y}_{3,4}\\
&  &  & \ddots\\
&  &  &  & -\hat{y}_{k-1,k-2}\\
&  &  &  & \hat{y}_{k-1,k} & -\hat{y}_{k,k-1}\\
\textbf{1} &  &  &  &
\end{array}\right]\end{aligned}\nonumber
\end{equation}
\end{singlespace}

\noindent This has a staircase-like structure since variables are only shared between neighboring blocks. \textbf{1} is a row vector consisting of $|\mathcal{P}_{1}|$ 1s, and since the $y$ coefficients are binary, all the elements in $\bar{A}$ have values of -1, 0 or 1. It is important to realize that each of the $\hat{y}_{ij}$ matrices have a single 1 in each column; only a single pattern from $Q_{ij}$ (the maximal patterns) matches each extreme point in $P_i$, and since each column in  $\hat{y}_{ij}$ corresponds to an extreme point in $P_i$, each column has a 1 for exactly the matching pattern. There is one exception to this, if some of the consecutive $\mathcal{S}_{i}$, $\mathcal{S}_{i+1}$ do not share variables the corresponding $\hat{y}_{i,i+1}$ and $\hat{y}_{i+1,i}$ will only consist of zeros. This means that every column in $\bar A$ has at most two nonzero elements.

We partition $\bar{A}$ according to Corollary III.2.8 into $\mathcal{Q}_1$ and $\mathcal{Q}_2$ such that $\mathcal{Q}_2$ consists only of the last row in $\bar{A}$ and $\mathcal{Q}_1$ of all remaining rows. This means that for all columns where $i\in\{2,\ldots,k-1\}$ there will be at most one 1 (from $\hat{y}_{i,i-1}$) and one -1 (from -$\hat{y}_{i,i+1}$), both of which are in $\mathcal{Q}_1$. This falls under the second case of the corollary. For $i=1$ each column will have at most two 1s (one from $\hat{y}_{1,2}$ and one from \textbf{1}), one in each of the partitions. This is in line with the first case of the corollary. Finally, the columns for $i=k$ have at most a single 1. For these and any of the other columns that might have fewer than 2 nonzero elements, the corollary is trivially satisfied. Hence, we conclude that $\bar{A}$ is TU.

The final step to prove Theorem \ref{Theorem} relies on the following proposition:

\begin{propos}\textbf{III.2.3}
\citep{wolsey1988integer} If $A$ is $TU$, if $b$, $b'$, $d$, and $d'$ are integral, and if $P(b,b',d,d')=\{x\in \mathbb{R}^n: b'\leq Ax\leq b, d'\leq x\leq d\}$ is not empty, then $P(b,b',d,d')$ is an integral polyhedron.
\end{propos}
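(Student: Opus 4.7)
The plan is to reduce the integrality claim to the standard principle that a nonempty bounded polyhedron equals the convex hull of its vertices, and then to show that each vertex is forced to be integer by the TU hypothesis. First, since $P(b,b',d,d')$ is contained in the integer box $\{x : d' \le x \le d\}$, it is bounded, so $P$ is an integral polyhedron if and only if every vertex of $P$ lies in $\mathbb{Z}^n$. The entire task therefore reduces to proving vertex-integrality.

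Next, I would recast $P$ in one-sided form as $P = \{x\in\mathbb{R}^n : Mx \le m\}$, where
\[
M = \begin{pmatrix} A \\ -A \\ I \\ -I \end{pmatrix}, \qquad m = \begin{pmatrix} b \\ -b' \\ d \\ -d' \end{pmatrix}.
\]
By hypothesis, $m$ is an integer vector. The central lemma is that $M$ is TU whenever $A$ is. I would prove this by taking an arbitrary square submatrix $N$ of $M$ and Laplace-expanding along any row coming from the $\pm I$ blocks: each such row has a single nonzero entry of value $\pm 1$, so the expansion multiplies the running determinant by $\pm 1$ and strips off one row and one column. The recursion terminates either in the empty matrix (with determinant $1$) or in a submatrix lying entirely within the $\pm A$ block, whose determinant lies in $\{-1,0,1\}$ by TU-ness of $A$. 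Thus $\det(N) \in \{-1,0,1\}$ for every square submatrix $N$, proving $M$ is TU.

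The final step invokes the standard vertex characterization: every vertex $x^{\ast}$ of $\{x : Mx \le m\}$ is the unique solution of $M_B x = m_B$ for some nonsingular $n\times n$ submatrix $M_B$ of $M$ with corresponding subvector $m_B$. Because $M$ is TU, $|\det M_B| = 1$, and by Cramer's rule $M_B^{-1} = \pm\,\mathrm{adj}(M_B)$. The adjugate of an integer matrix is itself an integer matrix (its entries are signed minors of $M_B$), so $M_B^{-1}$ is integer, and hence $x^{\ast} = M_B^{-1} m_B$ is integer as the product of an integer matrix and an integer vector.

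The only step that requires real care is the TU-preservation lemma for the augmented matrix; the Laplace-expansion recursion is routine once the $\pm$ bookkeeping is handled, but a pedantic write-up has to verify the base case (a submatrix contained entirely in the $\pm A$ block) and handle the degenerate case in which $N$ has more identity-rows than $A$-rows, which still collapses to a $\{-1,0,1\}$ determinant. Everything else — the reduction to vertices via boundedness, the vertex characterization for polyhedra in inequality form, and the Cramer-rule calculation — is entirely standard, so I do not foresee any deeper obstacle in turning this sketch into a complete proof.
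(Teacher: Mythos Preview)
Your argument is correct and is essentially the standard textbook proof of this result. Note, however, that the paper does not supply its own proof of this proposition: it is quoted verbatim from \citep{wolsey1988integer} and used as a black box in the proof of Theorem~\ref{Theorem}. There is therefore nothing in the paper to compare your argument against. Your reduction to vertex-integrality via boundedness, the TU-preservation for the stacked matrix $M=(A;-A;I;-I)$, and the Cramer's-rule finish are exactly the classical steps one finds in Wolsey or Schrijver, so your write-up would serve as a faithful expansion of the cited reference.
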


\noindent (\ref{eq:DW6_cons}) and (\ref{eq:DW6_dom}) fit directly into Proposition III.2.3 and since $\bar{A}$ is TU and $\bar{b}$ is integral (consists of 1s and 0s) it follows from the proposition that an optimal basic solution is integer.

The solution space of ($\mathsf{DW5}$) is exactly the same as that of ($\mathsf{DW4}$) due to lemmas \ref{WeJustNeedOneConv} and \ref{WeDoNotNeedVarSplitSingleVar}. It follows that an optimal basic solution to ($\mathsf{DW4}$) is integer and this concludes the proof of Theorem \ref{Theorem}.

The conditions of Theorem \ref{Theorem} might seem limiting, but observe that for any \emph{binary integer problem} (BIP) we can construct numerous decompositions that satisfy the conditions of the theorem; there are many ways to split the constraints into two sets $\mathcal{S}_{1}$ and $\mathcal{S}_{2}$ leaving $\mathcal{S}_{0}$ empty.

\begin{corollar}
Theorem \ref{Theorem} can be generalized by relaxing the second requirement to include bounded integer variables, this is done by substituting each integer linking variable with a set of binary variables.\label{BoundedIntegerVars}
\end{corollar}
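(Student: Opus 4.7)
The plan is to establish the corollary by an explicit reduction: take any decomposition satisfying conditions 1, 3, and 4 of Theorem \ref{Theorem} but whose linking variables may be bounded integers (instead of only binaries), construct from it an equivalent decomposition in which every linking variable is binary, verify that the transformed decomposition still satisfies all four conditions of Theorem \ref{Theorem}, and then invoke the theorem.

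Concretely, for each integer linking variable $x_{0}^{l}\in\{L_{l},L_{l}+1,\ldots,U_{l}\}$ (bounded by condition 4), I would introduce a collection of binary variables $z_{0}^{l,v}\in\{0,1\}$ for $v\in\{L_{l},\ldots,U_{l}\}$ together with the defining relations
\begin{equation*}
\sum_{v=L_{l}}^{U_{l}} z_{0}^{l,v}=1,\qquad x_{0}^{l}=\sum_{v=L_{l}}^{U_{l}} v\,z_{0}^{l,v},
\end{equation*}
and substitute the second expression for every occurrence of $x_{0}^{l}$ in the formulation. The $z_{0}^{l,v}$ then inherit the role of linking variables from $x_{0}^{l}$ and appear in exactly the same blocks. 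The SOS1-style equality is placed in $\mathcal{S}_{i}$ for each block $\mathcal{S}_{i}$ in which $x_{0}^{l}$ appeared, duplicated across blocks in the same manner as is done in the proof of Proposition \ref{prop:linkingCuts} for the auxiliary variables $y_q$. This keeps $\mathcal{S}_{0}$ empty (condition 1), makes every linking variable binary (condition 2), preserves the neighbor-only sharing pattern $\mathcal{X}_{ij}=\emptyset$ for $|i-j|\geq 2$ because the new $z$-variables are shared between precisely the same pairs of blocks as the original $x_{0}^{l}$ (condition 3), and keeps each $\bar{X}_{i}$ bounded since each $z_{0}^{l,v}\in[0,1]$ and the original $\bar{X}_{i}$ were bounded (condition 4).

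Having verified that the transformed model fulfils all hypotheses of Theorem \ref{Theorem}, I would apply the theorem to conclude that the optimal solution of its DW relaxation augmented with all consistency cuts is integer in the $z$-variables. The identity $x_{0}^{l}=\sum_{v}v\,z_{0}^{l,v}$ then immediately gives an integer value for every original linking variable $x_{0}^{l}$, and the integrality of the original block-specific variables $x_{i}$ is inherited directly from Theorem \ref{Theorem}. Since the substitution is an exact reformulation (the projection of feasible solutions of the transformed problem onto the original variables equals the feasible set of the original problem, and the objectives coincide), optimality is preserved.

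I expect the main subtlety to be the careful placement of the defining constraints $\sum_{v}z_{0}^{l,v}=1$ and $x_{0}^{l}=\sum_{v}v\,z_{0}^{l,v}$: they must live inside the subproblems rather than in $\mathcal{S}_{0}$ to maintain condition 1, and they must be duplicated across all blocks that contained $x_{0}^{l}$ so that the new $z$-variables are recognized as linking variables sharing exactly the same chain pattern. Once this bookkeeping is settled, the rest reduces cleanly to Theorem \ref{Theorem}. An alternative unary-to-binary encoding (e.g.\ logarithmic) would also work in principle but would be harder to argue about because it introduces additional coupling that could disturb condition 3; the per-value indicator encoding above is the natural choice.
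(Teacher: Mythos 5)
Your proof is correct, but it takes a different binarization than the paper. You use a one-hot (unary/indicator) encoding, $x_{0}^{l}=\sum_{v=L_{l}}^{U_{l}}v\,z_{0}^{l,v}$ with an SOS1 constraint $\sum_{v}z_{0}^{l,v}=1$ duplicated into every block containing $x_{0}^{l}$; the paper instead uses the logarithmic expansion $x=LB+\sum_{i=0}^{m}2^{i}z_{i}$ with $m=\lfloor\log_{2}(UB-LB)\rfloor+1$, dropping the lower-bound constraint but retaining $x\leq UB$ (needed because the expansion can overshoot the range). Both are exact reformulations whose new binaries live in exactly the blocks of the original integer variable, so conditions 1, 3 and 4 are preserved either way and Theorem \ref{Theorem} applies; your verification of this, including the placement of the defining constraints inside the subproblems rather than $\mathcal{S}_{0}$, is the right bookkeeping and matches the spirit of the paper's argument. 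The trade-off is size: the paper's encoding introduces $O(\log(UB-LB))$ new linking variables per integer variable versus your $O(UB-LB)$, which matters for the subproblems and for the pattern sets $Q_{ij}$ in practice; your encoding buys a slightly cleaner argument (no overshoot, no retained bound constraint, and integrality of $x_{0}^{l}$ follows immediately from integrality of the indicators). One small correction to your closing remark: the logarithmic encoding does not threaten condition 3 any more than the unary one does, since its $z_{i}$ variables are likewise shared between precisely the blocks that shared the original variable --- which is exactly why the paper can use it.
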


\noindent Consider an integer variable $x$ with upper bound $UB$ and lower bound $LB$. We define the parameter $m=\left \lfloor{log_2(UB-LB)}\right \rfloor+1$ and binary variables $z_i$ for $i\in\{0,\ldots,m\}$, this allows us to define $x$ as:

\begin{singlespace}
\begin{equation}
x=LB+\sum_{i=0}^{m}2^{i}z_i\label{eq:intbinsub}
\end{equation}
\end{singlespace}

\noindent We can remove the lower bound constraint for $x$ ($x\geq LB$), but we need to keep the upper bound constraint ($x\leq UB$). By substituting $x$ with the right-hand side of (\ref{eq:intbinsub}) we can replace the integer variable with a set of binary linking variables. This satisfies the second condition of Theorem \ref{Theorem}.

\begin{corollar}
Theorem \ref{Theorem} can be further generalized by rephrasing its third condition as such:

\begin{enumerate}
  \setcounter{enumi}{2}
  \item If a variable is shared between two blocks $\mathcal{S}_{i}$ and $\mathcal{S}_{j}$, where $i,j\in\{1,\ldots,k\}$ and $j-i\geq 2$, the variable must also be shared with all intermediate blocks $\mathcal{S}_{t}$ for $t\in\{i+1,\ldots,j-1\}$.
\end{enumerate}

\label{IntermediateSets}
\end{corollar}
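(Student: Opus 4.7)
The plan is to reduce to Theorem~\ref{Theorem} by showing that, under the rephrased condition~3, every consistency cut between non-consecutive blocks is already implied by the consistency cuts between consecutive blocks. Once such non-consecutive cuts are recognized as redundant, the reduced formulation has exactly the staircase shape that was exploited in the proof of Theorem~\ref{Theorem}, so the total unimodularity argument based on Corollary~III.2.8 and Proposition~III.2.3 carries over essentially unchanged.

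To carry this out, I would fix indices $i<j$ with $j-i\geq 2$ and a pattern $q\in Q_{ij}$. By the rephrased condition~3, every variable in $\mathcal{X}_{ij}$ also appears in $\mathcal{X}_{t,t+1}$ for each $t\in\{i,\ldots,j-1\}$, so any maximal pattern in $Q_{t,t+1}$ necessarily assigns values to all variables of $\mathcal{X}_{ij}$. Let $Q_{t,t+1}(q)$ denote those patterns in $Q_{t,t+1}$ that agree with $q$ on the coordinates in $\mathcal{X}_{ij}$.

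Summing the consecutive consistency cuts \eqref{eq:sharingcuts2} over all $q'\in Q_{t,t+1}(q)$, and arguing exactly as in Lemma~\ref{SumLemma} (each extreme point of $X_t$ whose $\mathcal{X}_{ij}$-values match $q$ matches precisely one extension $q'$), the sum collapses to
\[
\sum_{p\in\mathcal{P}_t(q)}\lambda_{tp}=\sum_{p\in\mathcal{P}_{t+1}(q)}\lambda_{t+1,p},
\]
where $\mathcal{P}_t(q)$ is the natural generalization of the notation used in the proof of Theorem~\ref{Theorem}: the indices of extreme points of $X_t$ whose linking-variable values agree with $q$ on $\mathcal{X}_{ij}$. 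Chaining these equalities through $t=i,i+1,\ldots,j-1$ telescopes to $\sum_{p\in\mathcal{P}_i(q)}\lambda_{ip}=\sum_{p\in\mathcal{P}_j(q)}\lambda_{jp}$, which is exactly the consistency cut for $q\in Q_{ij}$.

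Therefore every non-consecutive consistency cut is a linear combination of consecutive ones and can be dropped without changing the LP solution space. The arguments of Lemmas~\ref{WeJustNeedOneConv} and~\ref{WeDoNotNeedVarSplitSingleVar} then apply verbatim along every chain of consecutive blocks that share linking variables, which is precisely the configuration guaranteed by the new condition~3, so the reduced formulation recovers the constraint matrix $\bar A$ whose total unimodularity was established previously. I expect the main obstacle to be the bookkeeping in the telescoping step: one must verify carefully that summing over $Q_{t,t+1}(q)$ really produces the aggregated equality when $\mathcal{X}_{t,t+1}$ is strictly larger than $\mathcal{X}_{ij}$, which hinges on the fact that patterns in $Q_{t,t+1}$ are maximal over $\mathcal{X}_{t,t+1}$ so that the extensions of $q$ genuinely partition the matching extreme points.
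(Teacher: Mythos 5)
Your proposal is correct and takes essentially the same route as the paper: the paper isolates your telescoping/aggregation step as Lemma~\ref{OnlyMaximalIsNeeded} (showing that a cut on a sub-pattern $q'\in\tilde{Q}_{t,t+1}$ is implied by summing the cuts on its maximal extensions in $Q_{t,t+1}$, exactly your partition argument) and then chains it through the intermediate blocks in Lemma~\ref{WeOnlyNeedIntermediateCuts} before falling back on the total unimodularity argument of Theorem~\ref{Theorem}. The ``main obstacle'' you flag is precisely what Lemma~\ref{OnlyMaximalIsNeeded} resolves, and your justification (maximal patterns over $\mathcal{X}_{t,t+1}$ extending $q$ partition the matching extreme points) matches the paper's.
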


\noindent Corollary \ref{IntermediateSets} will follow easily from  Lemma \ref{WeOnlyNeedIntermediateCuts} below. To prove 
Lemma \ref{WeOnlyNeedIntermediateCuts} we need some further notation and Lemma \ref{OnlyMaximalIsNeeded}.
Earlier $Q_{ij}$ was defined as patterns involving all variables from $\mathcal{X}_{ij}$. It is also possible to define patterns on
a subset of variables from  $\mathcal{X}_{ij}$. All such patterns are denoted $\tilde{Q}_{ij}$ (so $Q_{ij} \subseteq \tilde{Q}_{ij}$). We write $q' \subset q$ for $q' \in \tilde{Q}_{ij}$
and $q \in Q_{ij}$ if the setting of variable values in $q'$ matches the setting in $q$ for the  variables they have in common. For $q' \in \tilde{Q}_{ij}$ we extend the notation $\mathcal{P}_i(q')$ to be the indices of extreme points of $X_{i}$ which pattern $q'$ matches.
We can now state

\begin{lemma}
Given $i,j\in\{1,\ldots,k\},i<j$ . For any $q'\in\tilde{Q}_{ij} \setminus Q_{ij}$ the equality

\begin{singlespace}
\[
\sum_{p\in\mathcal{P}_{i}(q')}\lambda_{ip}=\sum_{p\in\mathcal{P}_{j}(q')}\lambda_{jp}
\]
\end{singlespace}

\noindent is implied by the consistency cuts 

\begin{singlespace}
\begin{equation*}
\sum_{p\in\mathcal{P}_{i}(q)}\lambda_{ip}=\sum_{p\in\mathcal{P}_{j}(q)}\lambda_{jp}\quad\forall q\in Q_{ij}
\end{equation*}
\end{singlespace}
\label{OnlyMaximalIsNeeded}
\end{lemma}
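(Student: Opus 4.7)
The plan is to prove Lemma \ref{OnlyMaximalIsNeeded} by showing that each partial pattern $q' \in \tilde{Q}_{ij}\setminus Q_{ij}$ can be written as a disjoint union of maximal patterns that extend it, so that the desired equality reduces term-by-term to consistency cuts we already have.

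First I would establish a partitioning claim: for every $q' \in \tilde{Q}_{ij} \setminus Q_{ij}$,
\begin{equation*}
\mathcal{P}_i(q') \;=\; \bigsqcup_{\substack{q \in Q_{ij}\\ q' \subset q}} \mathcal{P}_i(q),
\end{equation*}
and analogously for $\mathcal{P}_j(q')$. The inclusion from right to left is immediate since any extreme point matching a maximal extension $q$ of $q'$ automatically matches $q'$. The inclusion from left to right follows because every extreme point $p \in \mathcal{P}_i$ corresponds to a fixed integer assignment of all variables in $x_0$, and in particular to a fixed assignment of every variable in $\mathcal{X}_{ij}$; hence $p$ matches exactly one maximal pattern $q \in Q_{ij}$, and if additionally $p$ matches the partial pattern $q'$, that unique maximal $q$ must satisfy $q' \subset q$. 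Disjointness of the union follows from this uniqueness: no extreme point can match two different maximal patterns simultaneously, since they would disagree on the value of some variable in $\mathcal{X}_{ij}$.

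Given the partition, the proof finishes in one display: sum $\lambda_{ip}$ over $\mathcal{P}_i(q')$, split the sum over the maximal extensions $q \supset q'$, apply the assumed consistency cut $\sum_{p\in\mathcal{P}_i(q)}\lambda_{ip} = \sum_{p\in\mathcal{P}_j(q)}\lambda_{jp}$ inside each summand, and then recombine using the analogous partition of $\mathcal{P}_j(q')$:
\begin{equation*}
\sum_{p\in\mathcal{P}_i(q')} \lambda_{ip} \;=\; \sum_{\substack{q\in Q_{ij}\\ q'\subset q}}\sum_{p\in\mathcal{P}_i(q)} \lambda_{ip} \;=\; \sum_{\substack{q\in Q_{ij}\\ q'\subset q}}\sum_{p\in\mathcal{P}_j(q)} \lambda_{jp} \;=\; \sum_{p\in\mathcal{P}_j(q')} \lambda_{jp}.
\end{equation*}

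The only real step that requires care is the partitioning claim; everything else is bookkeeping. The hard part, such as it is, is articulating why each extreme point of $X_i$ induces a unique complete assignment to the shared variables $\mathcal{X}_{ij}$ and therefore matches exactly one maximal pattern in $Q_{ij}$. This is ultimately a consequence of extreme points of $X_i$ being integral points of $\bar{X}_i$ by construction \eqref{eq:convexhull}, together with the binarity of linking variables, so the assignment of each shared variable is a single well-defined bit. Once this uniqueness is on the table, the rest is just manipulation of sums.
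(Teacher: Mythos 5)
Your proposal is correct and follows essentially the same route as the paper's proof: partition $\mathcal{P}_i(q')$ into the sets $\mathcal{P}_i(q)$ over maximal extensions $q\supseteq q'$, apply the consistency cut inside each term, and recombine on the $j$ side. The paper states the partitioning claim without elaboration, whereas you spell out why each extreme point matches exactly one maximal pattern; this is a welcome clarification but not a different argument.
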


\begin{proof}
The lemma follows from the equations below.

\begin{singlespace}
\[
\sum_{p\in\mathcal{P}_{i}(q')}\lambda_{ip}\overset{(a)}{=}\sum_{\begin{array}{c}
q\in Q_{ij}|q'\subseteq q
\end{array}}\sum_{\mathcal{P}_{i}(q)}\lambda_{ip}\overset{(b)}{=}\sum_{\begin{array}{c}
q\in Q_{ij}|q'\subseteq q
\end{array}}\sum_{\mathcal{P}_{j}(q)}\lambda_{jp}\overset{(c)}{=}\sum_{p\in\mathcal{P}_{j}(q')}\lambda_{jp}
\]
\end{singlespace}

\noindent (a) holds because $\mathcal{P}_{i}(q)$ for $q\in Q_{ij}$ where $q'\subseteq q$ is a partitioning of $\mathcal{P}_{i}(q')$. (b) follows directly from the definition of the consistency cuts and (c) follows from the same arguments as (a).
\end{proof}

\begin{lemma} Assume 3. from Corollary \ref{IntermediateSets}; given arbitrary $i,j\in\{1,\ldots,k\}\bigl|j-i\geq 2$ and $q\in Q_{ij}$ the corresponding consistency cut
\begin{singlespace}
\begin{equation}
\sum_{p\in\mathcal{P}_{i}(q)}\lambda_{ip}=\sum_{p\in\mathcal{P}_{j}(q)}\lambda_{jp}\label{singlesharingcut}
\end{equation}
\end{singlespace}
\noindent can be removed from \emph{($\mathsf{DW4}$)} without changing the solution space.\label{WeOnlyNeedIntermediateCuts}
\end{lemma}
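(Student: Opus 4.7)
The plan is to reduce the consistency cut between the distant blocks $i$ and $j$ to a telescoping chain of consistency cuts between neighboring blocks, using Lemma \ref{OnlyMaximalIsNeeded} at each link to pass from a maximal pattern to a sub-pattern.

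First, I would invoke the rephrased condition 3 of Corollary \ref{IntermediateSets}: every variable in $\mathcal{X}_{ij}$ must also appear in each intermediate block $\mathcal{S}_t$ for $t\in\{i+1,\ldots,j-1\}$, and hence $\mathcal{X}_{ij}\subseteq\mathcal{X}_{t,t+1}$ for all $t\in\{i,\ldots,j-1\}$. In particular, the given $q\in Q_{ij}$, which assigns values to all variables of $\mathcal{X}_{ij}$, can be viewed for each such $t$ as a (generally non-maximal) sub-pattern in $\tilde{Q}_{t,t+1}$, and the notation $\mathcal{P}_t(q)$ introduced before Lemma \ref{OnlyMaximalIsNeeded} is therefore well defined for every $t\in\{i,\ldots,j\}$.

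Second, for each $t\in\{i,\ldots,j-1\}$, applying Lemma \ref{OnlyMaximalIsNeeded} to the block pair $(\mathcal{S}_t,\mathcal{S}_{t+1})$ yields
\[
\sum_{p\in\mathcal{P}_t(q)}\lambda_{tp}=\sum_{p\in\mathcal{P}_{t+1}(q)}\lambda_{t+1,p},
\]
where the equality is implied by the maximal consistency cuts on $Q_{t,t+1}$ that are already present in (DW4). Telescoping these $j-i$ equalities gives
\[
\sum_{p\in\mathcal{P}_i(q)}\lambda_{ip}=\sum_{p\in\mathcal{P}_{i+1}(q)}\lambda_{i+1,p}=\cdots=\sum_{p\in\mathcal{P}_j(q)}\lambda_{jp},
\]
so \eqref{singlesharingcut} is implied by the neighboring consistency cuts and can be removed from (DW4) without altering the solution space.

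The main obstacle is essentially notational: one must verify that the interpretation of $q$ as a sub-pattern in $\tilde{Q}_{t,t+1}$ is consistent across the entire chain, so that $\mathcal{P}_t(q)$ always refers to the extreme points of $X_t$ matching the same fixing of the variables in $\mathcal{X}_{ij}$. Condition 3 is exactly what guarantees this uniformity. Corollary \ref{IntermediateSets} then follows immediately: repeated application of Lemma \ref{WeOnlyNeedIntermediateCuts} strips every non-neighboring cut from (DW4), reducing it to the staircase form (DW5) on which the total-unimodularity argument in the proof of Theorem \ref{Theorem} applies without modification.
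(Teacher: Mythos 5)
Your proposal is correct and follows essentially the same route as the paper: both arguments observe that condition 3 makes $q$ a (possibly non-maximal) sub-pattern in $\tilde{Q}_{t,t+1}$ for every consecutive pair along the chain, apply Lemma \ref{OnlyMaximalIsNeeded} at each link, and telescope the resulting equalities from block $i$ to block $j$. Your write-up is in fact slightly more explicit than the paper's about why $\mathcal{P}_t(q)$ is well defined for the intermediate blocks, but the underlying argument is identical.
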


\begin{proof}
The lemma follows from the equations below

\begin{singlespace}
\[
\sum_{p\in\mathcal{P}_{i}(q)}\lambda_{ip}\overset{(a)}{=}\sum_{\mathcal{P}_{i+1}(q)}\lambda_{ip}\overset{(b)}{=}\ldots\overset{(c)}{=}\sum_{\mathcal{P}_{j-1}(q)}\lambda_{jp}\overset{(d)}{=}\sum_{p\in\mathcal{P}_{j}(q)}\lambda_{jp}
\]
\end{singlespace}
\noindent Because of the assumption that all shared variables between $i$ and $j$ are also shared with $i+1$, it follows that $q\in\tilde{Q}_{i,i+1}$ (but $q \in Q_{i,i+1}$ may not be true). (a) now follows from Lemma \ref{OnlyMaximalIsNeeded} and the assumption that all consistency cuts based on $Q_{i,i+1}$ are present. (b) and (c) denote the connections through all the intermediate blocks between $i$ and $j$. These also follow from Lemma~\ref{OnlyMaximalIsNeeded} and the consistency cuts, and so does (d) with a similar argument as (a). Seeing that (\ref{singlesharingcut}) is trivially satisfied by the consistency cuts of the intermediate blocks, (\ref{singlesharingcut}) can be removed.
\end{proof}

\noindent By applying Lemma \ref{WeOnlyNeedIntermediateCuts} to all consistency cuts (\ref{eq:sharingcut_simple2}) from ($\mathsf{DW4}$) where $j-i\geq 2$ we can remove them, which leaves us with just the cuts where $|j-i|<2$. This brings the problem to the same form as specified in the third condition of Theorem \ref{Theorem} and hence proves Corollary \ref{IntermediateSets}. We refer to this structure as \emph{extended chain structure}.

Given a problem and its decomposition it can easily be checked if the extended chain structure is present as long as the blocks are ordered correctly. One can simply run through all variables and check that they are only shared between consecutive blocks. If only the constraint matrix is given, detecting the structure seems a lot more difficult. One could start by detecting a staircase structure in the constraint matrix \citep{jayakumar1994clustering}. That would correspond to the basic chain structure (not extended); however, there is no guarantee that the detected structure adheres to the other criteria of Theorem~\ref{Theorem}. This is an interesting topic for further research.

In general, the consistency cuts can be used in a branch-and-cut-and-price algorithm to provide optimal integer solutions. However, if the decomposition satisfies the conditions of Theorem \ref{Theorem} or the conditions of the generalizations from Corollary \ref{BoundedIntegerVars} and Corollary \ref{IntermediateSets}, the root node will be integer, and no branching will be needed.
 
Notice that if two blocks share a variable that some or all of the intermediate blocks do not, we can simply add that variable manually to the intermediate blocks where it is missing. This is done by introducing new variable linking constraints in the master problem corresponding to the blocks where the variable is missing. The dual values of these variable linking constraints can then be used in the appropriate subproblem objectives as the coefficients of the missing variables. Essentially, we simply act as if the variables were shared by all the intermediate blocks to begin with.

This extends Theorem \ref{Theorem} even further to now include $\textbf{any}$ decomposition as long as all linking variables are either binary or bounded integers (Corollary \ref{BoundedIntegerVars}) and $\mathcal{S}_0=\emptyset$ (we can always force $\mathcal{S}_0=\emptyset$ by changing the decomposition slightly to create a new block that contains the constraints of $\mathcal{S}_0$).
This might have potential for some problems; however, in practice, we might need to binarize a lot of integer variables (Corollary \ref{BoundedIntegerVars}) or add a lot of variables manually (Corollary \ref{IntermediateSets}). We currently lack experimental results for this, but it is something that should definitely be explored in the future as it could be an alternative to branching for some problems.

\section{Temporal knapsack problem}\label{TKP}

The goal of the TKP is to choose a profit-maximizing subset out of $n$ items to put in a "knapsack". Each item $i$ has a weight $w_i$, a profit $p_i$, and a time interval of activity $[s_i,t_i[$. Notice that an item is considered active at the start time of its activity interval but not at the end time. The chosen items have to satisfy that at any point in time the weight of the active items is less than or equal to the capacity $C$ of the knapsack. 

We define binary decision variables $x_i$, which are 1 if and only if item $i$ is chosen, and parameters $S_j:=\{i\in\{1,\ldots,n\}|s_i\leq s_j,t_i>s_j\}$ as the items that are active at the start time of item $j$. This allows us to write the model:

\begin{singlespace}
$\,$

\noindent($\mathsf{TKP}$)

\begin{minipage}{\textwidth-\parindent}

\begin{equation}
\mathrm{max}\:\sum_{i=1}^{n}p_{i}x_{i}\label{eq:TKP_obj}
\end{equation}

subject to

\begin{alignat}{2}
\sum_{i\in S_{j}}w_{i}x_{i} & \leq C & \quad & \forall j\in\{1,\ldots,n\}\label{eq:TKP_con}\\
x_{i} & \in\{0,1\} & \quad & \forall i\in\{1,\ldots,n\}\label{eq:TKP_dom}
\end{alignat}

\end{minipage}

$\,$
\end{singlespace}

\noindent The objective function, (\ref{eq:TKP_obj}) maximizes the total profit. Constraints (\ref{eq:TKP_con}) ensure that the capacity is respected at all times. Notice that we only need to check at the start time of each of the items as these are the only times when items become active and can potentially violate the capacity of the knapsack. It is possible to reduce the model by eliminating some of the capacity constraints. We say that a set $S_j$ is dominated if there exists another set $S_m$ for which $S_j\subseteq S_m$; dominated constraints will be trivially satisfied and can hence be removed. After removing all dominated constraints we are left with the set $N:=\{j\in\{1,\ldots,n\}|S_j\setminus S_m\neq\emptyset\,\;\forall m\in\{1,\ldots,n\},m\neq j\}$ and can modify (\ref{eq:TKP_con}) accordingly:

\begin{singlespace}
\begin{alignat}{2}
\sum_{i\in S_{j}}w_{i}x_{i} & \leq C & \quad & \forall j\in N\label{eq:TKP_con2}
\end{alignat}
\end{singlespace}

\subsection{Decomposition}

We apply DW reformulation in the same way as \citep{caprara2013uncommon}. In the terms used in this paper, this decomposition splits the constraints into the sets $\mathcal{S}_{i}$ for $i\in\{0,\ldots,k\}$ where $\mathcal{S}_{0}=\emptyset$. Furthermore, the constraints are ordered in the decomposition according to the time interval starting times. In other words, consider two constraints $j$ and $l$ from (\ref{eq:TKP_con2}) where $s_l>s_j$, if $j$ is in $\mathcal{S}_{i}$ and $l$ is in $\mathcal{S}_{m}$ then $m\geq i$. If starting times are identical, the ties are broken arbitrarily and this does not affect the structure. Recall example (\ref{eq:Ex_obj})-(\ref{eq:Ex_dom}), which is in fact a simple TKP. The decomposition used in the example is the same as the one from \citep{caprara2013uncommon}.

This means that the decomposition satisfies the four conditions of Theorem \ref{Theorem}. The first two and the last are easily verified ($\mathcal{S}_{0}=\emptyset$ and the model only has binary variables); however, the third one might be slightly harder to see. Each constraint represents a point in time, and the constraints are chronologically ordered. Because each variable is active only in one continuous time interval, the variables follow exactly the structure specified in Corollary \ref{IntermediateSets}. This means, by adding the consistency cuts to the DW relaxation of the TKP, we guarantee that the optimal solution of the DW relaxation is integer and no branching will be necessary.

Figure \ref{ConstraintMat} shows an example of a constraint matrix of a small TKP. The colored (green and black) cells are non-zero variable coefficients. The blue lines indicate where a decomposition with 16 constraints in each block would split the problem. Some of the variables overlap multiple blocks. These are the linking variables and are indicated by the color green rather than black. Figure~\ref{ConstraintMat} shows the variables are active in continuous time intervals. If the constraints were ordered differently, then it might not be immediately obvious that Corollary \ref{IntermediateSets} applies.

\begin{figure}
\centering
\includegraphics[width=3.25in]{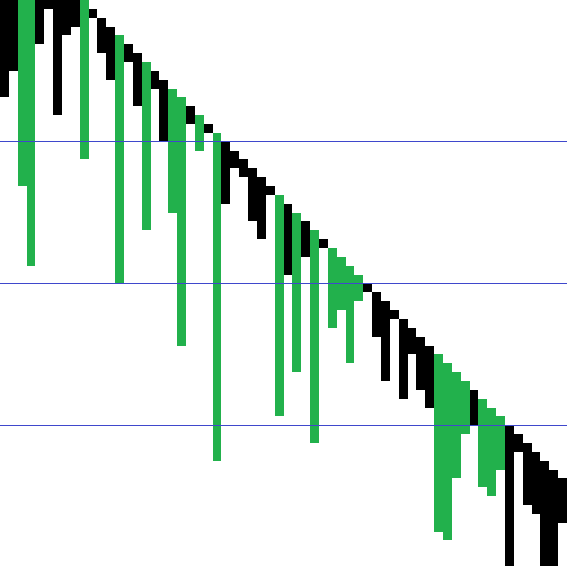}
\caption{Visualization of the constraint matrix of a small TKP. Colored (green and black) cells are non-zero variable coefficients. The blue lines indicate where the decomposition splits the constraints and the color green is used to indicate the linking variables.}
\label{ConstraintMat}
\end{figure}

\section{Implementation}\label{Implementation}

The DW relaxation with consistency cuts is used in a branch-and-cut-and-price framework to obtain optimal solutions to MIPs. This entails column generation, dynamically separating the consistency cuts, branching, and the choice of initial solution, all of which is explained below. 

\subsection{Column Generation}

DW reformulation typically results in a formulation with a large numbers of variables. Column generation is a technique for solving large scale LPs. It solves a master problem (here the DW relaxation ($\mathsf{DW3}$), with the consistency cuts (\ref{eq:sharingcuts})) by iteratively generating the columns/variables of that problem. The algorithm alternates between solving the \emph{restricted master problem} and one or more subproblems, often called pricing problems. We denote $\bar{\mathcal{P}}_{i}\subseteq\mathcal{P}_{i}$ such that $\{(\begin{array}{c c c}
\bar{x}_{0p}^{i} & \bar{x}_{ip}
\end{array})^\intercal\}_{p\in\bar{\mathcal{P}}_{i}}$ is a subset of the extreme points of $X_{i}$. The restricted master problem is given by:

\begin{singlespace}
$\,$

\noindent($\mathsf{RMP}$)

\begin{minipage}{\textwidth-\parindent}
\begin{equation}
\mathrm{min}\:c_{0}\sum_{i=1}^{k}\sum_{p\in\bar{\mathcal{P}}_{i}}\lambda_{ip}\frac{\bar{x}_{0p}^{i}}{k}+\sum_{i=1}^{k}c_{i}\sum_{p\in\bar{\mathcal{P}}_{i}}\lambda_{ip}\bar{x}_{ip}\label{eq:MAS_obj}
\end{equation}

subject to

\begin{alignat}{3}
\sum_{p\in\bar{\mathcal{P}}_{i}(q)}\lambda_{ip}-\sum_{p\in\bar{\mathcal{P}}_{j}(q)}\lambda_{jp} & =0 & \quad & \forall i,j\in\{1,\ldots,k\},i<j,q\in Q_{ij} & \quad & (\sigma_q)\label{eq:sharingcut_simple3}\\
H\sum_{p\in\bar{\mathcal{P}}_{1}}\lambda_{1p}\bar{x}_{0p}^{1}+\sum_{i=1}^{k}E_{i}\sum_{p\in\bar{\mathcal{P}}_{i}}\lambda_{ip}\bar{x}_{ip} & \leq b_{0} &  &  & \quad & (\kappa)\label{eq:MAS_set1}\\
\sum_{p\in\bar{\mathcal{P}}_{1}}\lambda_{1p}\bar{x}_{0p}^{1}-\sum_{p\in\bar{\mathcal{P}}_{i}}\lambda_{ip}\bar{x}_{0p}^{i} & =0 & \quad & \forall i\in\{2,\ldots,k\} & \quad & (\omega_{i})\label{eq:MAS_split}\\
\sum_{p\in\bar{\mathcal{P}}_{i}}\lambda_{ip} & =1 & \quad & \forall i\in\{1,\ldots,k\} & \quad & (\pi_{i})\label{eq:MAS_conv1}\\
\lambda_{ip} & \geq0 & \quad & \forall p\in\bar{\mathcal{P}_{i}},i\in\{1,\ldots,k\}\label{eq:MAS_conv2}
\end{alignat}

\end{minipage}

$\,$
\end{singlespace}

\noindent The restricted master problem is a reduced problem because we do not include all of the variables. Notice that the objective is modified slightly, the first term in (\ref{eq:MAS_obj}) is different from that in (\ref{eq:VS3_obj}). Rather than substituting the $x_0$ variables in the objective with the copies from the first subproblem, we use a weighted average across all subproblems. Preliminary tests showed that this change improves the convergence of column generation. If some linking variables only appear in the constraints of some blocks, then the $\frac{1}{k}$ coefficient would be modified accordingly.

We define the dual variables for each of the constraints in $RMP$. Let $\sigma_q$ for $q\in Q_{ij}$ be the dual variables associated with (\ref{eq:sharingcut_simple3}). $\kappa$ denotes the vector of dual variables associated with  (\ref{eq:MAS_set1}); it has an element for each row in $E$. For each $i\in\{2,\ldots,k\}$, $\omega_{i}$ is the vector of dual variables associated with (\ref{eq:MAS_split}) in subproblem $i$, it has an element for each element in $x_0$, i.e. $n_0$ elements. Finally, For each $i\in\{1,\ldots,k\}$, $\pi_{i}$ is the dual variable associated with  (\ref{eq:MAS_conv1}) in subproblem $i$. New columns for the restricted master problem are generated by solving the subproblems. The $i^{th}$ subproblem for $i>1$ is:

\begin{singlespace}
$\,$

\noindent

\begin{minipage}{\textwidth-\parindent}
\begin{equation}
\bar{c}_{i}=\mathrm{min}\:
\frac{c_{0}}{k}{x}_{0} + c_{i}{x}_{i}
-\kappa E_{i}x_{i}+\omega_{i}x_{0}-\pi_{i}
+\sum_{j\in\{1,\ldots,i-1\}}\sum_{q\in Q_{ij}}\sigma_q y_q
-\sum_{j\in\{i+1,\ldots,k\}}\sum_{q\in Q_{ij}}\sigma_q y_q\label{eq:SUB1_obj}
\end{equation}

subject to
\begin{alignat}{3}
F_{i}x_{0}+D_{i}x_{i} & \leq b_{i}\label{eq:SUB1_con}\\
\sum_{l\in\alpha_{q}}x_{0}^{l}+\sum_{l\in\beta_{q}}\left(1-x_{0}^{l}\right) & \geq(|\alpha_{q}|+|\beta_{q}|)y_{q} & \quad & \forall i,j\in\{1,\ldots,k\},i<j,q\in Q_{ij}\label{eq:sharing_y1_2}\\
\sum_{l\in\alpha_{q}}x_{0}^{l}+\sum_{l\in\beta_{q}}\left(1-x_{0}^{l}\right) & \leq(|\alpha_{q}|+|\beta_{q}|-1)+y_{q} & \quad & \forall i,j\in\{1,\ldots,k\},i<j,q\in Q_{ij}\label{eq:sharing_y2_2}\\
x_{0} & \in\mathbb{Z}^{n_{0}}\label{eq:SUB2_dom1}\\
x_{i} & \in\mathbb{Z}^{n_{i}}\label{eq:SUB2_dom2}\\
y_{q} & \in\{0,1\} & \quad & \forall i,j\in\{1,\ldots,k\},i<j,q\in Q_{ij}
\end{alignat}

\end{minipage}

$\,$
\end{singlespace}

\noindent Constraints (\ref{eq:SUB1_con}) are the constraints in block $i$ from (\ref{eq:DW1_blocks}). Constraints (\ref{eq:sharing_y1_2}) and (\ref{eq:sharing_y2_2}) are simply (\ref{eq:sharing_y1}) and (\ref{eq:sharing_y2}) applied to the subproblem; these define the $y_q$ variables. We need these variables to keep track of which patterns match each extreme point. The objective (\ref{eq:SUB1_obj}) defines the reduced cost $\bar{c}_{l}$ of the subproblem. If a subproblem has a solution with a negative reduced cost, the corresponding column is added to the DW relaxation. The first two terms of (\ref{eq:SUB1_obj}) are simply the original objective of $\mathsf{IP}$. The $x_0$ part is split evenly over the subproblems, and the $x_i$ part appears only in subproblem $i$. The rest of the objective deals with the dual values of the constraints in $(\mathsf{RMP})$. In the special case of ${i}=1$, we add the term $-\kappa Hx_0$ to the objective (\ref{eq:SUB1_obj}).


In each iteration, the restricted master problem and a subset of the subproblems are solved. Each subproblem is solved with CPLEX and a (deterministic) time limit $\tau$ measured in \emph{ticks} (see \citep{CPLEX-ticks}). If a subproblem is solved to optimality within the limit and the optimal solution has negative reduced cost, the corresponding column is added to the restricted master problem. When a subproblem does not produce a column with negative reduced cost (denoted a "fail") it is skipped in the following iterations. The number of iterations the subproblem is skipped is dependent on the number of consecutive fails of the subproblem. This is done to speed up the column generation process. When none of the solved subproblems provides a solution with negative reduced cost, column generation would normally terminate with an optimal solution to the master problem. However, because of time limits and the skipping of subproblems, this is not necessarily the case. To ensure optimality, the time limit is increased by a factor $\eta$, and any subproblems that were not solved within the old limit are rerun together with any skipped subproblems. The time limit is increased until at least one subproblem provides a column with negative reduced cost, in which case column generation has to continue; or until all subproblems are solved to optimality without a single one finding such a column, in which case the solution is optimal.

The column generation algorithm is stabilized using \textit{box penalty stabilization} (BPS) of the dual values \citep{du1999stabilized}. This technique relies on an initial estimate of the dual variables. A bounding box is placed around each estimate and deviations from this box are linearly penalized. The estimated dual variable values are then updated at each iteration. BPS has been shown to significantly improve the convergence of column generation. In our implementation a box of width 0 is used, i.e. there is essentially no box around the estimate in which we do not penalize deviations, rather all deviations are penalized linearly no matter how small they are \citep{pigatti2005stabilized}. The penalty factor is initially set to $\rho$. When the stabilized master problem is optimal, the penalty is reduced with a factor of $\xi$. This is repeated iteratively until the penalty reaches $\epsilon$. The penalty is then removed in the final round of stabilization to ensure we find an optimal solution to the master problem.

\subsection{Separation and handling of consistency cuts}

Rather than adding all consistency cuts at the beginning, as shown in $(\mathsf{RMP})$, we add them dynamically in rounds. After the DW relaxation is solved using column generation, we look for consistency cuts that are violated by the current solution. We then add these cuts to the DW relaxation and update the subproblems accordingly. Column generation is then resumed. This continues in \emph{cut rounds} until no more cuts can be added. A consistency cut is denoted by a triple $(i,j,q)$ where $i$ and $j$ are the blocks and $q$ is the pattern.

To separate the consistency cuts in an efficient manner it is convenient
to define data structures $\Phi_{ij}$ for $i,j\in\{1,\ldots,k\},i<j$.
Each element $\Phi{}_{ij}$ is a dictionary that maps from a pattern
$q$ between subproblem $i$ and $j$ to a pair $(v_{qi},v_{qj})$
where $v_{ql}=\sum_{p\in\mathcal{\bar{P}}_{l}(q)}\lambda_{lp}^{*}$.
That is, the first (resp. second) component of the pair stores the
total weight, in the current solution, of the extreme points from
block $i$ (resp $j$) that matches pattern $q$. Only pairs where
at least one component is non-zero are stored. Once this data structure
has been populated it is a matter of iterating through all its elements:
whenever we encounter a pair where the first and second element differ
then we have found a violated inequality. To limit the number of inequalities we can choose to only return the ones where the difference between
the elements in the pair is larger than a certain threshold $\delta$.

Algorithm \ref{alg:CDS} below shows how $(\Phi_{ij})$ can be constructed
in an efficient way (similar to standard matrix notation we use parentheses
around $\Phi{}_{ij}$ to indicate that we address all the dictionaries,
and not the single dictionary given by $i$ and $j$). The algorithm takes $(\lambda_{ip}^*)$ the solution to current RMP, $(\mathcal{X}_{ij})$ the variables shared between each pair of blocks, and $(\bar{x}_{ip})$ the extreme points in the RMP as inputs and returns $(\Phi_{ij})$. Lines \ref{fori}-\ref{onlynonzero}
scan all the variables in $(\mathsf{RMP})$, looking for the non-zero variables
that can potentially lead to updates of $(\Phi_{ij})$. Once a non-zero
$\lambda$-variable for block $i$ has been found we go through all
other blocks $j$ that share variables with block $i$ (lines \ref{forj}-\ref{checkoverlap}).
In line \ref{extractQ} we extract the pattern $q$ that corresponds
to a specific extreme point and a specific pair of blocks, in line
\ref{lookupQij} we look up this pattern in $\Phi_{ij}$. If the pattern
does not exist in $\Phi_{ij}$ then the \emph{find} function returns
the pair $(0,0)$. In line \ref{updateQij} we update the weights
stored in the pair and insert it back into position $q$. Lines \ref{lookupQij2}
and \ref{updateQij2} are similar but handle the case when $i>j$.

We now turn to the computational complexity of the algorithm. Let
$\eta$ be the number of $\lambda$-variables in the restricted master
problem and let $\bar{\eta}$ be the number of $\lambda$-variables
in the restricted master problem that take a non-zero value, the computational
complexity of the algorithm is then $O(k^{2}+\eta+\bar{\eta}kn)$.
The initial $k^{2}$ comes from initializing the $O(k^{2})$ elements
in $(\Phi_{ij})$ (not shown in Algorithm \ref{alg:CDS}) the $\eta$
term comes from lines \ref{fori} and \ref{forp} that run through
each variable in the restricted master problem. The last $\bar{\eta}kn$
term comes from that for each variable in the restricted master problem
that takes a non-zero value we have to execute the lines \ref{forj}
to \ref{endforj}. The check in line \ref{checkoverlap} can be done
in constant time and line \ref{extractQ} can be done in $O(n)$.
Since $\Phi_{ij}$, for a specific block-pair $i,j$, in the worst
case can contain $2^{n}$ elements (this is a very pessimistic bound)
the find and insert operations can be done in $O(n)$ using an implementation
based on, for example, red-back trees (see \citep{cormen2009introduction}).

Once $(\Phi_{ij})$ has been computed it is trivial to find the violated
inequalities. For each $i,j\in\{1,\ldots,k\}$ where $i<j$, one goes through
all the pairs $(v_{1},v_{2})$ in $\Phi_{ij}$. If $v_{1}\neq v_{2}$
then the pattern $q$, that corresponds to the given pair, together
with the block indices $i$ and $j$ identifies a violated inequality. We refer to the complete process of generating $(\Phi_{ij})$ and finding the violated inequalities as the \emph{cut separation algorithm}. All the data structures in $(\Phi_{ij})$ contains at most $\bar{\eta}k$
pairs in total since we at most add $k$ new pairs to the data structure
every time we enter lines \ref{forj} to \ref{endforj}. The running
time of the entire cut separation algorithm is therefore dominated by
the time to construct the $\Phi_{ij}$ sets and the total running
time of the algorithm is $O(k^{2}+\eta+\bar{\eta}kn)$. The algorithm
is therefore polynomial in its inputs, since it takes the current
RMP solution as input, but not necessarily in the size of the original
MIP that we are solving. The problem is, that the Dantzig-Wolfe reformulated
model may contain a number of variables that are exponential in the
size of the original problem. Even though column generation typically
only generates a small subset of these variables we cannot guarantee
that the number of variables generated (and thereby $\text{\ensuremath{\eta}}$)
does not grow exponentially with the input size of the original problem.
In practice, we expect that solving the linear programming formulation
of the restricted master problem, in general, is going to be more
time consuming than running the cut separation algorithm (since the running
time of the LP solver also depends on $\eta$) . The computational
results in Section \ref{CompResults} support this conjecture.

\begin{algorithm} 
$\mathbf{compute\_Phi}((\lambda_{ip}^*), (\mathcal{X}_{ij}), (\bar{x}_{ip}))$
\begin{algorithmic}[1]
\FOR {$i\in\{1,\ldots,k\}$} \label{fori} 
\FOR {$p\in \bar{\mathcal{P}}_i$} \label{forp}
\IF {$\lambda_{ip}^* > 0$} \label{onlynonzero}
\FOR {$j\in\{1,\ldots,k\},i\neq j$} \label{forj}
\IF {$\mathcal{X}_{ij} \neq \emptyset$} \label{checkoverlap}
\STATE $q=\mathbf{extract\_pattern}(\bar{x}_{ip}, \mathcal{X}_{ij})$ \label{extractQ}
\IF{$i < j$}
\STATE $(v_1,v_2)$ = find($\Phi_{ij}, q$)\label{lookupQij} 
\STATE $\mathbf{insert}(\Phi_{ij},q,(v_1+\lambda_{ip}^*,v_2))$\label{updateQij}
\ELSE
\STATE $(v_1,v_2)$ = find($\Phi_{ji}, q$)\label{lookupQij2}
\STATE $\mathbf{insert}(\Phi_{ji},q,(v_1,v_2+\lambda_{ip}^*))$\label{updateQij2}
\ENDIF
\ENDIF
\ENDFOR \label{endforj}
\ENDIF
\ENDFOR
\ENDFOR
\RETURN $(\Phi_{ij})$
\end{algorithmic}
\caption{Constructs ($\Phi_{ij}$) which can then be used directly to separate all violated consistency cuts.\label{alg:CDS}}
\end{algorithm}

\noindent For each cut $(i,j,q)$ we need to add a variable linking constraint to the restricted master problem; we refer to the dual of this constraint as $\mu_{q}$. Subproblems $i$ and $j$ must be modified by adding the variable $y_q$ and the constraints (\ref{eq:sharing_y1}) and (\ref{eq:sharing_y2}). Furthermore, the subproblem objectives (\ref{eq:SUB1_obj}) are modified by adding the term $\mu_{q}y_{q}$. In our implementation, the $\mu_{q}$ dual values are not stabilized like the other dual values in $(\mathsf{RMP})$. One could experiment with stabilizing them like the others, as it might speed up convergence.

\subsection{Example}

We briefly return to the example (\ref{eq:Ex_obj})-(\ref{eq:Ex_dom}) to show what the linking constraint separation would look like. The possible patterns between subproblem 1 and 2 are: $\{(x_2,0),(x_3,0)\}$, $\{(x_2,1),(x_3,1)\}$, $\{(x_2,0),(x_3,1)\}$, and $\{(x_2,1),(x_3,0)\}$. The first (resp. last) two patterns appear in the extreme points of subproblem 1 (resp. 2) both with a weight of 0.5, i.e. $\Phi_{1,2}$ holds an entry for all four patterns, the first two are given by $(0.5, 0)$ and the last two by $(0, 0.5)$. Running through the elements in $\Phi_{1,2}$ we find that all of them correspond to a violated cut. The next step would be to add these to the problem and resolve it. As seen in Section \ref{subsec:example}, adding just the cut on the $\{(x_2,0),(x_3,0)\}$ pattern is actually enough to reach optimality for this example.

\subsection{Branching}

If the conditions of Theorem \ref{Theorem} applies there is never a need to branch. However, in this work we also test the consistency cuts on problems that do not satisfy all of the conditions, and hence branching will sometimes be necessary. The branching is done on the original variables of the problem. We can easily convert the $\lambda$-variables back to the original variables using (\ref{eq:VS2_set2}). When we branch on a variable, we change the bounds of that variable in the subproblem. We branch on the variable that we assess would have the highest impact on the bound, if it were forced to be integer (branched on), and create two new nodes. Consider a variable $x$ with objective value $c$. We define 

\begin{singlespace}
\[
f = c\cdot min(\ceil{x}-x, x-\floor{x})
\]
\end{singlespace}

\noindent We branch on the variable with the highest value of $f$. Preliminary tests showed that this branching rule worked better than other simpler rules, e.g. the most fractional variable. After branching, we transfer all columns from the parent node to each of the branches, in order to warm start the column generation part of the algorithm. Before adding the columns, we filter away those that do not satisfy the branch. We also transfer all consistency cuts to the branches so these do not need to be generated again. If a node in the branch-and-bound tree is infeasible given the columns from the master, we use Farkas pricing \citep{achterberg2009scip} to try to generate feasible columns. If that is not possible, the node is, in fact, infeasible and can be pruned.

\subsection{Initial solution}

In general, the convergence of column generation is sensitive to the initial solution that the master problem is provided with. The column generation procedure is initialized with a feasible solution that is obtained by running CPLEX on the original problem for a specified amount of time. An initial solution is not strictly necessary. The motivation for including it is to avoid a sequence of Farkas pricing steps to obtain feasibility. If no solution is found within the time limit, then Farkas pricing is used.


\section{Computational results}\label{CompResults}

To show the potential of the consistency cuts we have performed two separate tests. Firstly, the cuts were tested on the TKP. As explained in Section \ref{TKP}, the TKP has the chain structure sufficient for the consistency cuts to ensure optimal solutions without branching. Secondly, a broader test was performed, inspired by the test performed in \citep{bergner2015automatic}. In this test, a subset of the instances from MIPLIB2017 are automatically decomposed and solved using the branch-and-cut-and-price framework. Here there is no guarantee that the consistency cuts will ensure optimality without branching since the conditions of Theorem \ref{Theorem} are not necessarily satisfied. Recall that the algorithm is controlled by several parameters. Preliminary tests showed that the following setting works well, 
$$(\delta, \tau, \eta, \rho, \xi, \epsilon)= (0.05, 10000, 10, 1, 0.15, 0.00000009)$$
The algorithm's performance could probably be improved further with an in-depth tuning of the parameters, but this is not the focus of this work. CPLEX was given 5000 ticks to find an initial solution.

\subsection{TKP}
\label{subsec:tkp}

A benchmark set of 200 instances of the TKP was used for testing. This test set originates from \citep{caprara2013uncommon} and has become the standard way of benchmarking solution methods for the TKP. The test set is split into two categories of 100 problems each, the I-instances and the U-instances. For a detailed description of how the instances are generated see \citep{caprara2013uncommon}. An interesting observation is that the average LP gap of the I-instances is $14.55\%$ whereas it is only $1.61\%$ for the U-instances. This is rather curious since the U-instances are generally harder to solve than the I-instances. However, this is apparently not because of weaker LP bounds.

In the following, the branch-and-cut-and-price algorithm with cuts \textbf{CP} and without cuts \textbf{BP} are benchmarked against CPLEX and other solution methods for the TKP. \citep{caprara2013uncommon} also proposed a DW reformulation with linking variables for solving the TKP. Column generation was used to solve the DW relaxation and a commercial solver was used to solve the subproblems. When comparing results later, we will refer to this method as \textbf{CFM}. \citep{caprara2016solving} uses the same decomposition as \citep{caprara2013uncommon} but always splits the original problem into two evenly sized sets of constraints. This creates two smaller TKP problems that can then be solved recursively in the same way. The authors use 3-4 levels of recursion and solve the subproblems in the final level with a MIP solver. By dividing the problem into two large blocks the bounds are much better than if many smaller subproblems were used; however, unlike when using DW relaxation with consistency cuts, the root node is not guaranteed to be optimal. The paper only benchmarks on a subset of the I-instances and is hence not included in this section. A comparison to our solution methods can, however, be found in Appendix A. The running times of our methods are significantly lower, but it is unclear whether this is due to stabilization, improved hardware, or algorithmic differences. Other solution methods have been tested; \citep{gschwind2017stabilized} proposed a stabilized version of the column generation from \citep{caprara2013uncommon}, and \citep{clautiaux2019dynamic} uses dynamic programming. We will refer to these as \textbf{GI} and \textbf{CDG} respectively.

Both \citep{caprara2013uncommon} and \citep{gschwind2017stabilized} showed that the block size, i.e. number of constraints in each block, has a huge impact on the solution time. This prompted us to also test different block sizes, with and without consistency cuts. With consistency cuts the sizes \{16,32,64,96,128\} were tested, and without, we tested \{16,32,64,96,128,192,256,384\}. The best performing size proved to be 32 with cuts, and 256 without. The performance of the method proved much less sensitive to choosing the best block size when using cuts compared to not using cuts. The extended results can be found in Appendix B, which also shows the average amount of time spent solving the subproblems and the master problem as well as the time spent separating cuts. In general the subproblems take up almost all the the solution time, the time spent separating the cuts is very insignificant in comparison.

Table \ref{CutRounds} shows the number of instances solved to integer optimality after $n$ cut rounds when using block size 32 (the best performing). The U-instances need a few more rounds than the I-instances; however, almost all of the instances are solved after just 5 rounds. The remaining 5 instances, which are also the ones that take the most time to solve, are solved within 8 rounds.

Table \ref{CompareResults} gives an overview of our results and the results of the other methods, that have been used to solve the 200 test instances in the past. It also shows the performance of CPLEX 12.8, referred to as \textbf{CPLEX}. The table does not provide a completely fair comparison since some of the methods use older versions of CPLEX, older hardware, fewer threads, or less time than others. The hardware used for \textbf{CPLEX}, \textbf{CP}, and \textbf{BP} was 4 threads on a XeonE5\_2680v2 with 2.8 GHz and 16 GB RAM, and version 12.8 of CPLEX. \textbf{CFM} used CPLEX 12.1 and a single thread on an INTEL Core2 Duo E6550 with 2.33 GHz and 2 GB RAM. \textbf{GI} used CPLEX 12.5 and one thread on an Intel(R) Core(TM) i7-2600 with 3.4 GHz and 16 GB RAM. Finally, \textbf{CDG} used CPLEX 12.7 and 6 threads on two Dodeca-core Haswell INTEL XeonE5\_2680v3 with 2.5 GHz and 32 GB. The table shows \textbf{CDG} with both a 1-hour and 3-hour time limit. The block sizes used for \textbf{CP} and \textbf{BP} were 32 and 256 as these were, respectively, the best performing. The results for \textbf{CFM} and \textbf{GI} were obtained with a less strict criterion on the block size. For these the best performing block sizes were chosen on a per-instance basis from the sizes \{1,2,4,8,16,32,64,128,256\} for \textbf{CFM} and \{16,32,64,96,128\} for \textbf{GI}. N/A in the table stands for not available as this data was not presented in the corresponding papers.

\textbf{CFM} solves the fewest instances but it is also the oldest algorithm. \textbf{GI} is a stabilized version of \textbf{CFM} and performs better. It is unclear how much of this is due to improvements to CPLEX and hardware, and how much is actually from the stabilization. \textbf{BP} is essentially our own version of \textbf{GI}. Clearly, the results are better, but the comparison is not fair. However, \textbf{BP} and \textbf{CPLEX} can be compared and here \textbf{BP} is slightly superior. \textbf{CDG} has shown the best results on the 200 instances outside of this work. However, to beat CPLEX, they need more than a 1-hour time limit. They show in their paper that they will indeed outperform CPLEX when both are given a 3-hour time limit. Finally, \textbf{CP} shows the best results so far, outperforming \textbf{CPLEX}, \textbf{BP}, and \textbf{CDG}. 

Figure \ref{Comparison} plots the number of instances solved after x hours for the methods \textbf{CP}, \textbf{BP}, \textbf{CPLEX}, and \textbf{CDG}. In order to make this comparison as fair as possible, all methods use a single thread; however, as explained earlier, for the \textbf{CDG} results CPLEX 12.7 was used as opposed to 12.8 for the others. On the contrary, \textbf{CDG} used slightly newer hardware as well as double the amount of RAM. All four methods were given a time limit of 3 hours. \textbf{CDG} surpasses \textbf{CPLEX} after 1.2 hours. It also seems that, if given more time, \textbf{CDG} would perform better than \textbf{BP}; \textbf{CDG} is worse than \textbf{BP} until around the 3-hour limit. Most notably, the plot emphasizes how much faster \textbf{CP} is than the other methods. \textbf{CP} finishes each of the 200 instances well within the 3-hour limit, spending 5080 seconds on the hardest one. Furthermore, after just 970 seconds (less than a tenth of the time limit) \textbf{CP} has solved more instances than any of the other three methods manage to solve within the full 10800 seconds limit.

It is important to emphasize that the superior performance of \textbf{CP} is not due to the inclusion of implementation aspects such as stabilization, subproblem management, or starting from a feasible initial solution. The implementations of \textbf{CP} and \textbf{BP} are identical with the exception that consistency cuts are applied to obtain integrality with the former, while branching is used in the latter. The performance improvement is solely due to the inclusion of the consistency cuts.

\begin{singlespace}
\begin{table}
\centering
\begin{tabular}{ccccccccccc}
\# of cut rounds & & 0 & 1 & 2 & 3 & 4 & 5 & 6 & 7 & 8\tabularnewline
\hline
\multirow{2}{*}{\# of optimal} & I & 6 & 75 & 99 & 100 & 100 & 100 & 100 & 100 & 100\tabularnewline
 & U & 17 & 44 & 60 & 78 & 88 & 95 & 97 & 99 & 100\tabularnewline
\end{tabular}
\caption{Number of cut rounds needed with block size 32 for integer optimality of the I and U-instances.}
\label{CutRounds}
\end{table}
\end{singlespace}

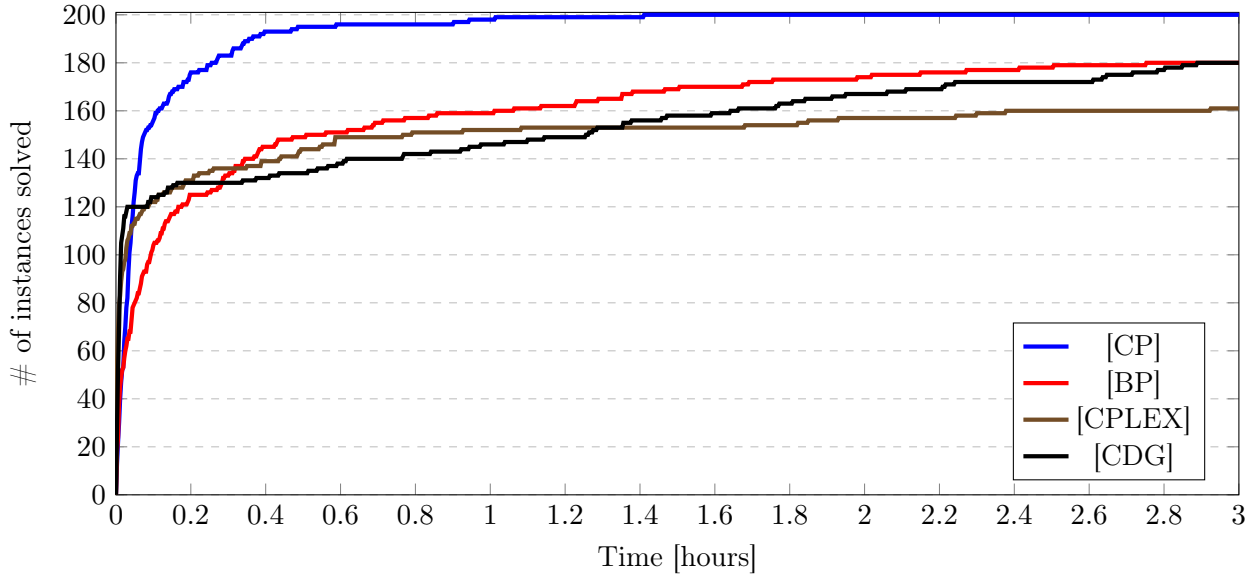
\begin{figure}
    \begin{tikzpicture}
\begin{axis}[
    no markers,
    every axis plot/.append style={ultra thick},
    xlabel={Time [hours]},
    ylabel={\# of instances solved},
    height=8cm,
    width=\textwidth,
    xmin=0, xmax=3,
    ymin=0, ymax=201,
    xtick={0,0.2,0.4,0.6,0.8,1,1.2,1.4,1.6,1.8,2,2.2,2.4,2.6,2.8,3},
    ytick={0,20,40,60,80,100,120,140,160,180,200},
    legend pos=north west,
    ymajorgrids=true,
    grid style=dashed,
    legend pos=south east
]

\addplot table [x=hours, y=sharing, col sep=space] {data.dat};
\addplot table [x=hours, y=bnb, col sep=space] {data.dat};
\addplot table [x=hours, y=cplex, col sep=space] {data.dat};
\addplot table [x=hours, y=cdg, col sep=space] {data.dat};
\addlegendentry{[CP]}
\addlegendentry{[BP]}
\addlegendentry{[CPLEX]}
\addlegendentry{[CDG]}
 
\end{axis}
\end{tikzpicture}
    \caption{Plot of time vs number of instances solved for the methods \textbf{CP}, \textbf{BP}, \textbf{CPLEX}, and \textbf{CDG} when all given a single thread and a 3-hour time limit.}
    \label{Comparison}
\end{figure}

\begin{singlespace}
\begin{table}
\centering
\begin{tabular}{lrrrrr}
\hline 
Method & \# of threads & Time limit [h] & \#Optimal & Avg. gap [\%] & Avg. time [s]\tabularnewline 
\hline 
\textbf{CFM} & 1 & 1 & 138 & 0.07 & N/A \tabularnewline

\textbf{GI} & 1 & 1 & 154 & 0.02 & N/A \tabularnewline

\textbf{CDG} & 6 & 1 & 161 & N/A & N/A \tabularnewline

\textbf{CDG} & 6 & 3 & 194 & N/A & N/A \tabularnewline 

\textbf{CPLEX} & 4 & 1 & 173 & 0.02 & 674\tabularnewline

\textbf{CP} & 4 & 1 & 200 & 0.00 & 201\tabularnewline

\textbf{BP} & 4 & 1 & 179 & <0.01 & 833\tabularnewline



\hline
\end{tabular}
\caption{Comparison of historical results, the performance of our methods and CPLEX on the test set.}
\label{CompareResults}
\end{table}
\end{singlespace}

\subsection{MIPLIB2017}

Solving generic MIPs using DW reformulation is an interesting research direction, that still is in its infancy. Noteworthy contributions include \citep{lubbecke2015primal, bergner2015automatic, wang2013computational}. Currently, solution methods that automatically detect the decomposition structure cannot compete with state-of-the-art branch-and-cut solvers.
The purpose of this test is to show that the consistency cuts can improve the DW relaxation bounds of generic MIPs. We only test the consistency cuts on shared binary variables. We do not binarize any shared integer variables. For this test, we limit ourselves to instances with a density between $0.05$ and $5\%$, at most $20,000$ non-zero elements, and at least $20\%$ of variables must be integer. These characteristics describe instances that are good for DW reformulation, according to \citep{bergner2015automatic}. Using MIPLIB2017 and the listed filters, this leaves us with 171 instances (Notice that in \citep{bergner2015automatic} MIPLIB2003 and MIPLIB2010 were used instead). We also filter away infeasible instances. Decompositions are generated automatically using hypergraph partitioning, specifically \texttt{hMETIS} \citep{hmetis}. We define each problem as a hypergraph using the \emph{Row-Column-Net Hypergraph} method described in \citep{bergner2015automatic}. In this method, each non-zero element in the constraint matrix is a vertex in the graph. For each variable (column) in the problem, we define a hyperedge spanning all vertices corresponding to non-zero elements in that column. Edges corresponding to continuous variables are given a weight of 1 and integer/binary variables a weight of 2. This means that \texttt{hMETIS} would rather allow continuous linking variables than integer or binary. Similarly, for each constraint (row), we create a hyperedge spanning the non-zero elements in the row. These are given a weight of 5. Given a number of blocks, \texttt{hMETIS} separates the vertices into that many components. Following  \citep{bergner2015automatic}, we add $20\%$ more dummy vertices that are not included in any hyperedges. Per default, \texttt{hMETIS} tries to make evenly sized components; however, the dummy vertices allow for some variance in size. From the partition, we generate the corresponding decomposition. Each component of the partition corresponds to a block. If all non-zero elements of a constraint are in the same component, that constraint is put in the corresponding block. Constraints that do not fit in any block are considered coupling constraints. For each instance, we generate five decompositions with sizes 2, 5, 10, 20, and 50 blocks. \citep{bergner2015automatic} examined sizes ranging from 2 to 10; however, we include decompositions with many smaller blocks as well because this worked well for the TKP experiments. All tests are given a 1-hour time limit using the same hardware and same version of CPLEX as specified in Section~\ref{subsec:tkp}.

Table \ref{MIPLIB1} shows an overview of the MIPLIB results. It categorizes the instances according to different criteria. The instances of each category are a subset of the instances in the categories to the left of that category. The \emph{solved} category includes all instances where at least one of the tested decompositions was solved in the root node within the 1-hour time limit (no consistency cuts). The framework that we use to solve the instances does not handle unbounded subproblems; there are 37 instances where all five decompositions have unbounded blocks, these count as not solved. Out of the remaining instances, two of them are not solved because the decomposition generated by \texttt{hMETIS} is empty, i.e. all constraints are coupling constraints. The category \emph{not optimal in root} contains instances where one or more decompositions were solved but optimality is not proven. In other words, these are the instances where the bound of at least one of the decompositions could be improved. The instances with \emph{cuts applicable} satisfy all of the above, but also have the property that there are at least one pair of blocks that share at least two binary variables. This essentially means that at least one consistency cut could potentially be applied. The category \emph{cuts applied} counts the instances where cuts were applied for at least one of the decompositions. In the \emph{LB improved} category, the LB of at least one of the five decompositions was improved with consistency cuts. Finally, in the category \emph{best LB improved}, the best lower bound out of all five decompositions was found using consistency cuts. The table shows that out of the 62 instances where cuts can be applied, the cuts help on at least one of the tested decompositions for 20 of them and improves the best lower bound on 10 of them. For most of the instances where the cuts were applied and did not improve the solution, the method ran out of time.

We took a closer look at the instances from category \emph{best LB improved} in Table \ref{MIPLIB2}. For each instance, the results are given for each of the five decompositions (notice that \texttt{hMETIS} sometimes makes partitions that do not use the full number of allowed blocks). The best known upper bounds shown are taken directly from the MIPLIB2017 website. For each instance, the lower bound and time to find it are shown with and without consistency cuts. A "-" means that the algorithm ran out of time. Lower bounds are reported even when the algorithm has run out of time, before finishing the root node computation. In this case, the reported number is the best lower bound observed before running out of time. Lower bounds are computed "continuously" using the technique described in \citep{lasdon1970ptimization}, section 3.7. The best found lower bound for each instance is marked in bold. The bounds are compared to the LP relaxation as well as the best solution found by CPLEX. All of the 10 problems are easily solved by CPLEX, showing that the proposed methodology still is not competitive with a state-of-the-art solver on generic MIPs. The table also shows the number of cut rounds used and the total number of cuts added. Finally, the gap closed is shown, calculated with the formula $\frac{LB(cuts)-LB(no cuts)}{UB-LB(no cuts)}$. A 100\% improvement means that the entire gap before cuts was closed when adding the cuts. For 3 of the instances, the gap is closed completely using the cuts. The last instance, \emph{qnet1\_0}, is interesting because the best bound is not found with the 2 block decomposition like the others but rather the 10 block decomposition. It should be noted that generating automatic decompositions does not guarantee that we obtain the chain structure described earlier. Appendix C shows the results for the remaining instances in the \emph{LB improved} category. In this test, we have shown that even without the chain structure, the cuts can improve the lower bound of generic MIPs.

\begin{singlespace}
\begin{table}
\centering
\begin{tabular}{ccccccc}
\hline 
\multirow{2}{*}{total} & \multirow{2}{*}{solved} & not optimal & cuts & cuts & \multirow{2}{*}{LB improved} & best LB\tabularnewline
 & & in root & applicable & applied & & improved\tabularnewline
\hline 
171 & 113 & 106 & 62 & 52 & 20 & 10 \tabularnewline
\hline
\end{tabular}
\caption{Overview of MIPLIB results.}
\label{MIPLIB1}
\end{table}
\end{singlespace}

\begin{small}
\begin{landscape}
\begin{singlespace}
\setlength{\tabcolsep}{2pt}
\centering
\begin{longtable}{cccccccccccccccccccc}
\hline 
\multirow{2}{*}{instance} & best known &  & \multicolumn{3}{c}{CPLEX} &  & \# of &  & \multicolumn{2}{c}{DW no consistency cuts} &  & \multicolumn{2}{c}{DW consistency cuts} &  & cut &  & cuts &  & gap\tabularnewline
\cline{4-6} \cline{10-11} \cline{13-14} 
 & solution &  & LP relax & LB & time [s] &  & blocks &  & LB & time [s] &  & LB & time [s] &  & rounds &  & added &  & improved [\%]\tabularnewline
\hline 
\multirow{5}{*}{neos-831188} & \multirow{5}{*}{2.61} &  & \multirow{5}{*}{1.77} & \multirow{5}{*}{2.61} & \multirow{5}{*}{17.7} &  & 2 &  & 2.37 & 1092.5 &  & 2.45 & - &  & 12 &  & 210 &  & 35.69\tabularnewline
 &  &  &  &  &  &  & 5 &  & 2.04 & 649.9 &  & 2.06 & - &  & 2 &  & 1019 &  & 3.71\tabularnewline
 &  &  &  &  &  &  & 10 &  & 1.87 & 585.0 &  & 1.87 & - &  & 1 &  & 2032 &  & 0\tabularnewline
 &  &  &  &  &  &  & 20 &  & 1.82 & 715.2 &  & 1.82 & - &  & 1 &  & 7684 &  & 0\tabularnewline
 &  &  &  &  &  &  & 50 &  & 1.77 & 752.1 &  & 1.77 & - &  & 1 &  & 7965 &  & 0\tabularnewline
\hline 
\multirow{5}{*}{toll-like} & \multirow{5}{*}{610} &  & \multirow{5}{*}{0.00} & \multirow{5}{*}{610} & \multirow{5}{*}{272.8} &  & 2 &  & 584 & 363.9 &  & 585 & - &  & 8 &  & 55 &  & 3.85\tabularnewline
 &  &  &  &  &  &  & 5 &  & 574 & 129.2 &  & 580.71 & - &  & 4 &  & 318 &  & 18.61\tabularnewline
 &  &  &  &  &  &  & 10 &  & 547 & 69.1 &  & 572 & - &  & 3 &  & 586 &  & 39.68\tabularnewline
 &  &  &  &  &  &  & 20 &  & 525 & 60.2 &  & 557.70 & - &  & 3 &  & 1354 &  & 38.46\tabularnewline
 &  &  &  &  &  &  & 50 &  & 472 & 46.0 &  & 565.11 & - &  & 3 &  & 2175 &  & 67.47\tabularnewline
\hline 
\multirow{5}{*}{enlight8} & \multirow{5}{*}{27} &  & \multirow{5}{*}{0.00} & \multirow{5}{*}{27} & \multirow{5}{*}{0.0} &  & 2 &  & 14 & 28.3 &  & 27 & 72.9 &  & 6 &  & 96 &  & 100\tabularnewline
 &  &  &  &  &  &  & 5 &  & 8.17 & 26.6 &  & 27 & 254.8 &  & 7 &  & 661 &  & 100\tabularnewline
 &  &  &  &  &  &  & 10 &  & 7.38 & 26.2 &  & 20.22 & 573.6 &  & 8 &  & 734 &  & 65.46\tabularnewline
 &  &  &  &  &  &  & 20 &  & 2.20 & 26.3 &  & 10.51 & 461.8 &  & 6 &  & 867 &  & 33.48\tabularnewline
 &  &  &  &  &  &  & 50 &  & 1.77 & 26.2 &  & 6.30 & 90.3 &  & 7 &  & 579 &  & 17.94\tabularnewline
\hline 
\multirow{5}{*}{enlight\_hard} & \multirow{5}{*}{37} &  & \multirow{5}{*}{0.00} & \multirow{5}{*}{37} & \multirow{5}{*}{0.0} &  & 2 &  & 34.11 & 40.0 &  & 37 & 68.9 &  & 2 &  & 44 &  & 100\tabularnewline
 &  &  &  &  &  &  & 5 &  & 27.20 & 31.6 &  & 37 & 635.8 &  & 8 &  & 917 &  & 100\tabularnewline
 &  &  &  &  &  &  & 10 &  & 23.28 & 28.6 &  & 36.21 & 3612.5 &  & 9 &  & 1746 &  & 94.25\tabularnewline
 &  &  &  &  &  &  & 20 &  & 20.03 & 27.3 &  & 31.88 & 2099.1 &  & 6 &  & 1263 &  & 69.84\tabularnewline
 &  &  &  &  &  &  & 50 &  & 17.19 & 27.9 &  & 24.68 & 452.5 &  & 4 &  & 1095 &  & 37.81\tabularnewline
\hline 
\multirow{5}{*}{neos-2652786-brda} & \multirow{5}{*}{4.79} &  & \multirow{5}{*}{0.00} & \multirow{5}{*}{4.79} & \multirow{5}{*}{143.9} &  & 2 &  & 1.67 & 153.1 &  & 1.81 & 237.5 &  & 1 &  & 6 &  & 4.39\tabularnewline
 &  &  &  &  &  &  & 5 &  & 0.26 & 143.4 &  & 0.28 & 288.4 &  & 2 &  & 39 &  & 0.37\tabularnewline
 &  &  &  &  &  &  & 10 &  & 0.17 & 95.9 &  & 0.18 & 131.1 &  & 1 &  & 27 &  & 0.26\tabularnewline
 &  &  &  &  &  &  & 20 &  & 0 & 50.2 &  & 2.14 & 514.0 &  & 9 &  & 915 &  & 0\tabularnewline
 &  &  &  &  &  &  & 50 &  & 0 & 36.4 &  & 0 & 42.7 &  & 0 &  & 0 &  & 0\tabularnewline
\hline 
\multirow{5}{*}{22433} & \multirow{5}{*}{21477} &  & \multirow{5}{*}{21240.53} & \multirow{5}{*}{21477} & \multirow{5}{*}{0.1} &  & 2 &  & 21472.28 & 479.4 &  & 21473.22 & - &  & 12 &  & 645 &  & 19.94\tabularnewline
 &  &  &  &  &  &  & 5 &  & 21332.85 & 490.6 &  & 21332.85 & - &  & 3 &  & 2390 &  & 0\tabularnewline
 &  &  &  &  &  &  & 10 &  & 21246.43 & 325.3 &  & 21246.43 & - &  & 1 &  & 2280 &  & 0\tabularnewline
 &  &  &  &  &  &  & 20 &  & 21244.06 & 108.2 &  & 21244.06 & - &  & 1 &  & 2449 &  & 0\tabularnewline
 &  &  &  &  &  &  & 48 &  & 21240.53 & 49.0 &  & 21240.53 & - &  & 1 &  & 1476 &  & 0\tabularnewline
\hline 
\multirow{5}{*}{fiber} & \multirow{5}{*}{405935.18} &  & \multirow{5}{*}{156082.52} & \multirow{5}{*}{405935.18} & \multirow{5}{*}{0.3} &  & 2 &  & 402702.35 & 54.6 &  & 405935.18 & 169.7 &  & 11 &  & 94 &  & 100\tabularnewline
 &  &  &  &  &  &  & 5 &  & 392217.89 & 96.6 &  & 395650.62 & - &  & 18 &  & 627 &  & 25.02\tabularnewline
 &  &  &  &  &  &  & 10 &  & 390493.82 & 84.9 &  & 403591.41 & - &  & 6 &  & 722 &  & 84.82\tabularnewline
 &  &  &  &  &  &  & 20 &  & 390493.82 & 83.5 &  & 394889.57 & - &  & 8 &  & 492 &  & 28.47\tabularnewline
 &  &  &  &  &  &  & 49 &  & 233791.93 & 59.5 &  & 233791.93 & 45.8 &  & 1 &  & 56 &  & 0\tabularnewline
\hline 
\multirow{5}{*}{neos-3611689-kaihu} & \multirow{5}{*}{119} &  & \multirow{5}{*}{100.88} & \multirow{5}{*}{119} & \multirow{5}{*}{0.5} &  & 2 &  & 114.93 & 37.8 &  & 115.09 & 56.7 &  & 2 &  & 22 &  & 3.81\tabularnewline
 &  &  &  &  &  &  & 5 &  & 113.65 & 37.5 &  & 113.75 & 60.3 &  & 3 &  & 82 &  & 1.83\tabularnewline
 &  &  &  &  &  &  & 10 &  & 111.66 & 32.5 &  & 111.72 & 31.9 &  & 1 &  & 61 &  & 0.85\tabularnewline
 &  &  &  &  &  &  & 20 &  & 108.91 & 33.1 &  & 108.91 & 29.6 &  & 1 &  & 14 &  & 0\tabularnewline
 &  &  &  &  &  &  & 50 &  & 105.29 & 32.9 &  & 105.29 & 42.0 &  & 1 &  & 22 &  & 0\tabularnewline
\hline 
\multirow{5}{*}{rout} & \multirow{5}{*}{1077.56} &  & \multirow{5}{*}{981.86} & \multirow{5}{*}{1077.47} & \multirow{5}{*}{4.8} &  & 2 &  & \multicolumn{2}{c}{Unbounded sup problem} &  &  &  &  & \tabularnewline
 &  &  &  &  &  &  & 5 &  & 1036.85 & 128.2 &  & 1037.81 & 3366.1 &  & 10 &  & 416 &  & 2.36\tabularnewline
 &  &  &  &  &  &  & 10 &  & \multicolumn{2}{c}{Unbounded sup problem} &  &  &  &  & \tabularnewline
 &  &  &  &  &  &  & 20 &  & \multicolumn{2}{c}{Unbounded sup problem} &  &  &  &  & \tabularnewline
 &  &  &  &  &  &  & 50 &  & 981.86 & 30.3 &  & 981.86 & 30.4 &  & 0 &  & 0 &  & 0\tabularnewline
\hline 
\multirow{5}{*}{qnet1\_o} & \multirow{5}{*}{16029.69} &  & \multirow{5}{*}{12095.57} & \multirow{5}{*}{16029.19} & \multirow{5}{*}{0.4} &  & 2 &  & 15837.64 & - &  & 15837.64 & - &  & 0 &  & 0 &  & 0\tabularnewline
 &  &  &  &  &  &  & 5 &  & 15837.64 & 1386.0 &  & 15849.97 & - &  & 2 &  & 58 &  & 6.42\tabularnewline
 &  &  &  &  &  &  & 10 &  & 15837.64 & 1088.5 &  & 16000.49 & - &  & 2 &  & 44 &  & 84.8\tabularnewline
 &  &  &  &  &  &  & 20 &  & 15498.91 & 520.2 &  & 15901.59 & - &  & 2 &  & 238 &  & 75.87\tabularnewline
 &  &  &  &  &  &  & 50 &  & 13963.27 & 33.6 &  & 13963.27 & 33.5 &  & 0 &  & 0 &  & 0\tabularnewline
\hline 
\caption{Extended MIPLIB results for instances where consistency cuts improve the best lower bound} \label{MIPLIB2}
\end{longtable}
\end{singlespace}
\end{landscape}
\end{small}

\section{Conclusion}\label{Conclusion}

In this paper, we studied how to reformulate (mixed) integer programming problems using DW reformulation with linking variables. The paper contains four main results: 1) We propose a new family of valid inequalities to be applied to any DW reformulation of a MIP where two blocks in the reformulation share multiple binary variables. 2) We show that, for DW reformulations of MIPs that satisfy certain properties, it is enough to solve the LP relaxation of the DW reformulation with all consistency cuts to obtain integer solutions. 3) We show that the theory is applicable in practice by applying it to the temporal knapsack problem. This leads to a state-of-the-art algorithm for the problem that is able to solve all instances from the standard benchmark set used in the academic literature. Earlier algorithms that have been proposed in the literature have not succeeded in this. 4) We prove that the cuts can improve DW relaxation bounds on generic MIPs from MIPLIB2017.


The results shown here motivate further research into the consistency cuts. There are two immediately interesting directions to take this research. One could look for other problems that, like the TKP, have the extended chain structure and apply the consistency cuts. Alternatively, one could investigate methods for recognizing the extended chain structure, or structures that closely resemble it, in generic MIPs. This would most likely improve the performance of the cuts in an experiment like the one made here with the instances from MIPLIB2017.


\section*{Acknowledgements}

The authors gratefully acknowledge the financial support of the Danish Council for Independent Research (Grant ID: DFF - 7017-00341). And would like to thank the two anonymous reviewers and the associate editor Prof. Daniel Kuhn, for their constructive comments that have improved the paper. We also thank Prof. Guy Desaulniers, Prof. Stefan Irnich, and Prof. Jesper Larsen for their comments to the final version of the paper. Finally, we thank Prof. Marco L\"{u}bbecke for insightful comments on our research.

\section*{Author Biographies}

\textbf{Jens Vinther Clausen} recently completed his PhD at the Department of Technology, Management and Economics, Technical University of Denmark. His research interests are mathematical programming, exact solutions, decompositions methods, metaheuristics, and machine learning.

\noindent\textbf{Stefan Ropke} is a professor at Department of Technology, Management and Economics, Technical University of Denmark. His research interests include mathematical programming, decomposition algorithms and metaheuristics. His main field of application is transportation, including freight distribution, maritime transport and public transport.

\noindent\textbf{Richard Lusby} is an Associate Professor at the Department of Technology, Management and Economics, Technical University of Denmark. His research interests include integer programming, decomposition algorithms, and matheuristics, and their applications to public transportation and manpower planning problems.

\noindent

\bibliographystyle{plainnat}
\bibliography{refs}

\appendix

\section*{Appendix A}\label{Appen A}

\begin{singlespace}
\begin{table}[H]
\centering
\begin{tabular}{ccccc}
\hline 
instance & recursive DW time {[}s{]} & root gap DW {[}\%{]} & BP time {[}s{]} & CP time {[}s{]}\tabularnewline
\hline 
I41 & 93.5 & 0.018 & 151.2 & 61.1\tabularnewline
I43 & 1107.4 & 0 & 463.8 & 81.1\tabularnewline
I45 & 258.4 & 0 & 409.0 & 106.1\tabularnewline
I47 & 303.2 & 0 & 78.4 & 148.0\tabularnewline
I49 & 2322.1 & 0 & 230.8 & 179.3\tabularnewline
I61 & 190.5 & 0 & 17.0 & 20.3\tabularnewline
I63 & 897.8 & 0 & 84.4 & 29.4\tabularnewline
I65 & 3430.7 & 0.04 & 22.9 & 34.5\tabularnewline
I67 & 2523.9 & 0 & 30.6 & 33.2\tabularnewline
I69 & 1510.7 & 0 & 44.7 & 54.6\tabularnewline
I71 & 69.9 & 0 & 62.0 & 61.5\tabularnewline
I73 & 276.9 & 0 & 86.8 & 80.8\tabularnewline
I75 & 395.5 & 0 & 91.8 & 123.3\tabularnewline
I77 & 949.0 & 0 & 235.9 & 119.0\tabularnewline
I79 & 1056.8 & 0 & 234.7 & 192.7\tabularnewline
I81 & 271.5 & 0 & 453.3 & 95.0\tabularnewline
I83 & 948.4 & 0 & 609.8 & 77.6\tabularnewline
I85 & 784.8 & 0 & 414.4 & 114.4\tabularnewline
I87 & 961.7 & 0 & 164.6 & 151.7\tabularnewline
I89 & 1221.5 & 0 & 276.1 & 230.5\tabularnewline
I91 & 130.9 & 0 & 95.4 & 81.7\tabularnewline
I93 & 390.2 & 0 & 118.4 & 103.1\tabularnewline
I95 & 1161.6 & 0 & 122.9 & 117.2\tabularnewline
I97 & 2869.5 & 0 & 91.0 & 133.9\tabularnewline
I99 & 2424.1 & 0 & 425.2 & 166.1\tabularnewline
\hline 
\end{tabular}
\caption{Performance comparison between the methods from this paper (BP and CP) and the recursive DW reformulation presented in \citep{caprara2016solving}}
\end{table}
\end{singlespace}

\section*{Appendix B}\label{Appen B}

\begin{singlespace}
\begin{table}[H]
\centering
\begin{tabular}{ccrrrrrrrr}
\hline 
\thead{Block \\ size \\ \ } &  & \thead{\# of \\ optimal \\ \ } & \thead{Avg. \\ gap [\%] \\ \ } & \thead{Avg. \\ time {[}s{]} \\ \ } & \thead{Avg. \\ time \\ ($\mathsf{RMP}$) {[}s{]}} & \thead{Avg. \\ time \\ sub {[}s{]}} & \thead{Avg. \# \\ of col \\ gen iters} & \thead{Avg. \# of \\ branch-and \\ -price nodes} & \thead{Avg. \# of \\ columns \\ added}\tabularnewline
\hline 
\multirow{2}{*}{16} & I & 21 & 0.05 & 3029 & 11 & 2984 & 2414 & 108 & 154064\tabularnewline
 & U & 40 & 0.06 & 2347 & 254 & 2054 & 5125 & 87 & 207570\tabularnewline
\hline
\multirow{2}{*}{32} & I & 50 & 0.01 & 2249 & 4 & 2221 & 1438 & 67 & 51365\tabularnewline
 & U & 57 & 0.02 & 1853 & 38 & 1798 & 2980 & 51 & 69044\tabularnewline
\hline
\multirow{2}{*}{64} & I & 77 & <0.01 & 1274 & 1 & 1257 & 689 & 31 & 13183\tabularnewline
 & U & 75 & 0.01 & 1325 & 3 & 1313 & 991 & 19 & 11148\tabularnewline
\hline 
\multirow{2}{*}{96} & I & 75 & <0.01 & 1173 & 1 & 1156 & 513 & 23 & 6623\tabularnewline
 & U & 78 & <0.01 & 1272 & 1 & 1260 & 536 & 10 & 3372\tabularnewline
\hline 
\multirow{2}{*}{128} & I & 93 & <0.01 & 618 & <1 & 604 & 258 & 12 & 2474\tabularnewline
 & U & 79 & <0.01 & 1133 & 1 & 1122 & 225 & 5 & 981\tabularnewline
\hline
\multirow{2}{*}{192} & I & 92 & <0.01 & 774 & <1 & 759 & 197 & 9 & 1435\tabularnewline
 & U & 78 & 0.01 & 1356 & <1 & 1346 & 171 & 4 & 451\tabularnewline
\hline 
\multirow{2}{*}{256} & I & 97 & <0.01 & 474 & <1 & 462 & 123 & 6 & 561\tabularnewline
 & U & 82 & 0.01 & 1190 & <1 & 1181 & 65 & 2 & 102\tabularnewline
\hline 
\multirow{2}{*}{384} & I & 92 & <0.01 & 522 & <1 & 509 & 99 & 4 & 279\tabularnewline
 & U & 61 & - & 2476 & <1 & 2467 & 42 & 1 & 49\tabularnewline
\hline
\end{tabular}
\caption{Performance comparison without consistency cuts for different block sizes when solving the I and U-instances.}
\label{BlockSizeBnP}
\end{table}
\end{singlespace}

\begin{singlespace}
\begin{table}[H]
\centering
\begin{tabular}{ccrrrrrrrrr}
\hline 
\thead{Block \\ size \\ \ } &  & \thead{\# of \\ optimal \\ \ } & \thead{Avg. \\ gap [\%] \\ \ } & \thead{Avg. \\ time {[}s{]} \\ \ } & \thead{Avg. \\ time \\ ($\mathsf{RMP}$) {[}s{]}} & \thead{Avg. \\ time \\ sub {[}s{]}} & \thead{Avg. \\ time cut \\ sep {[}s{]}} & \thead{Avg. \# \\ of col \\ gen iters} & \thead{Avg. \# \\ of cut \\ rounds} & \thead{Avg. \# of \\ columns \\ added}\tabularnewline
\hline 
\multirow{2}{*}{16} & I & 100 & 0.00 & 132 & <1 & 119 & 0.79 & 89 & 1.46 & 1327\tabularnewline
 & U & 98 & <0.01 & 519 & 53 & 454 & 0.48 & 372 & 2.61 & 2904\tabularnewline
\hline
\multirow{2}{*}{32} & I & 100 & 0.00 & 134 & <1 & 122 & 0.52 & 74 & 1.20 & 657\tabularnewline
 & U & 100 & 0.00 & 267 & 3 & 254 & 0.34 & 245 & 2.22 & 1197\tabularnewline
\hline
\multirow{2}{*}{64} & I & 100 & 0.00 & 118 & <1 & 106 & 0.41 & 62 & 0.97 & 326\tabularnewline
 & U & 100 & 0.00 & 340 & 1 & 330 & 0.24 & 166 & 1.46 & 502\tabularnewline
\hline
\multirow{2}{*}{96} & I & 100 & 0.00 & 162 & <1 & 149 & 0.41 & 55 & 0.89 & 221\tabularnewline
 & U & 96 & <0.01 & 645 & 1 & 633 & 0.26 & 147 & 1.26 & 326\tabularnewline
\hline
\multirow{2}{*}{128} & I & 100 & 0.00 & 130 & <1 & 117 & 0.34 & 50 & 0.72 & 164\tabularnewline
 & U & 91 & <0.01 & 776 & <1 & 766 & 0.14 & 110 & 0.60 & 200\tabularnewline
\hline
\end{tabular}
\caption{Performance comparison with consistency cuts for different block sizes when solving the I and U-instances.}
\label{BlockSize}
\end{table}
\end{singlespace}

\section*{Appendix C}\label{Appen C}

\begin{small}
\begin{landscape}
\begin{singlespace}
\setlength{\tabcolsep}{2pt}
\centering
\begin{longtable}{cccccccccccccccccccc}
\hline 
\multirow{2}{*}{instance} & best know &  & \multicolumn{3}{c}{CPLEX} &  & \# of &  & \multicolumn{2}{c}{DW no consistency cuts} &  & \multicolumn{2}{c}{DW consistency cuts} &  & cut &  & cuts &  & gap\tabularnewline
\cline{4-6} \cline{10-11} \cline{13-14} 
 & solution &  & LP relax & LB & time [s] &  & blocks &  & LB & time [s] &  & LB & time [s] &  & rounds &  & added &  & improved [\%]\tabularnewline
\hline 
\multirow{5}{*}{chromaticindex32-8} & \multirow{5}{*}{4} &  & \multirow{5}{*}{3} & \multirow{5}{*}{4} & \multirow{5}{*}{4.7} &  & 2 &  & 4 & 36.5 &  & 4 & 39.7 &  & 0 &  & 0 &  & 0\tabularnewline
 &  &  &  &  &  &  & 5 &  & 4 & 37.6 &  & 4 & 41.3 &  & 0 &  & 0 &  & 0\tabularnewline
 &  &  &  &  &  &  & 10 &  & 4 & 33.6 &  & 4 & 37.2 &  & 0 &  & 0 &  & 0\tabularnewline
 &  &  &  &  &  &  & 20 &  & 4 & 32.7 &  & 4 & 36.1 &  & 0 &  & 0 &  & 0\tabularnewline
 &  &  &  &  &  &  & 50 &  & 3 & 57.0 &  & 3.2 & - &  & 13 &  & 7141 &  & 20\tabularnewline
\hline 
\multirow{5}{*}{supportcase14} & \multirow{5}{*}{288} &  & \multirow{5}{*}{272} & \multirow{5}{*}{288} & \multirow{5}{*}{0.1} &  & 2 &  & 288 & 25.2 &  & 288 & 27.9 &  & 0 &  & 0 &  & 0\tabularnewline
 &  &  &  &  &  &  & 5 &  & 274 & 30.0 &  & 287.65 & - &  & 7 &  & 639 &  & 97.47\tabularnewline
 &  &  &  &  &  &  & 10 &  & 274.67 & 27.6 &  & 280 & 59.8 &  & 3 &  & 142 &  & 40\tabularnewline
 &  &  &  &  &  &  & 20 &  & 274 & 29.4 &  & 278.5 & 91.1 &  & 5 &  & 187 &  & 32.14\tabularnewline
 &  &  &  &  &  &  & 49 &  & 272 & 27.1 &  & 275 & 32.0 &  & 2 &  & 28 &  & 18.75\tabularnewline
\hline 
\multirow{5}{*}{supportcase16} & \multirow{5}{*}{288} &  & \multirow{5}{*}{272} & \multirow{5}{*}{288} & \multirow{5}{*}{0.2} &  & 2 &  & 288 & 27.0 &  & 288 & 30.6 &  & 0 &  & 0 &  & 0\tabularnewline
 &  &  &  &  &  &  & 5 &  & 274 & 31.6 &  & 286.11 & - &  & 9 &  & 694 &  & 86.47\tabularnewline
 &  &  &  &  &  &  & 10 &  & 274.67 & 29.2 &  & 280 & 45.7 &  & 3 &  & 82 &  & 40\tabularnewline
 &  &  &  &  &  &  & 20 &  & 272 & 29.8 &  & 283.33 & 81.2 &  & 6 &  & 168 &  & 70.83\tabularnewline
 &  &  &  &  &  &  & 45 &  & 272 & 28.8 &  & 275 & 35.8 &  & 2 &  & 32 &  & 18.75\tabularnewline
\hline 
\multirow{5}{*}{neos-686190} & \multirow{5}{*}{6730} &  & \multirow{5}{*}{5134.81} & \multirow{5}{*}{6730} & \multirow{5}{*}{19.5} &  & 2 &  & 5545.30 & 841.6 &  & 5545.3 & 843.0 &  & 0 &  & 0 &  & 0\tabularnewline
 &  &  &  &  &  &  & 5 &  & 5545.30 & 137.4 &  & 5545.3 & 137.5 &  & 0 &  & 0 &  & 0\tabularnewline
 &  &  &  &  &  &  & 10 &  & 5215.26 & 309.2 &  & 5301.06 & - &  & 9 &  & 558 &  & 5.66\tabularnewline
 &  &  &  &  &  &  & 20 &  & 5236.93 & 262.6 &  & 5390.16 & - &  & 12 &  & 759 &  & 10.26\tabularnewline
 &  &  &  &  &  &  & 50 &  & 5140.32 & 298.9 &  & 5140.32 & - &  & 1 &  & 1292 &  & 0\tabularnewline
\hline 
\multirow{5}{*}{neos-2624317-amur} & \multirow{5}{*}{3.52} &  & \multirow{5}{*}{0} & \multirow{5}{*}{3.52} & \multirow{5}{*}{1.7} &  & 2 &  & 2.24 & 135.9 &  & 2.24 & 135.7 &  & 0 &  & 0 &  & 0\tabularnewline
 &  &  &  &  &  &  & 5 &  & 1.64 & 1407.7 &  & 1.64 & 1479.0 &  & 1 &  & 33 &  & 0.12\tabularnewline
 &  &  &  &  &  &  & 10 &  & 1.38 & 134.3 &  & 1.38 & 161.9 &  & 2 &  & 20 &  & 0.24\tabularnewline
 &  &  &  &  &  &  & 20 &  & 0.01 & 89.6 &  & 0.03 & 107.1 &  & 1 &  & 56 &  & 0.47\tabularnewline
 &  &  &  &  &  &  & 50 &  & 0 & 31.3 &  & 0 & 30.9 &  & 0 &  & 0 &  & 0\tabularnewline
\hline 
\multirow{5}{*}{aligninq} & \multirow{5}{*}{2713} &  & \multirow{5}{*}{2654.74} & \multirow{5}{*}{2713} & \multirow{5}{*}{3.0} &  & 2 &  & 2655.52 & 46.4 &  & 2655.52 & 42.5 &  & 1 &  & 8 &  & 0\tabularnewline
 &  &  &  &  &  &  & 3 &  & 2655.52 & 43.8 &  & 2655.52 & 40.0 &  & 1 &  & 10 &  & 0\tabularnewline
 &  &  &  &  &  &  & 6 &  & 2655.51 & 42.1 &  & 2655.52 & 45.3 &  & 3 &  & 57 &  & 0.02\tabularnewline
 &  &  &  &  &  &  & 11 &  & 2655.05 & 39.4 &  & 2655.52 & 81.6 &  & 2 &  & 129 &  & 0.8\tabularnewline
 &  &  &  &  &  &  & 30 &  & 2654.74 & 40.9 &  & 2655.52 & 398.9 &  & 3 &  & 548 &  & 1.34\tabularnewline
\hline 
\multirow{5}{*}{mcsched} & \multirow{5}{*}{211913} &  & \multirow{5}{*}{193774.75} & \multirow{5}{*}{211913} & \multirow{5}{*}{32.4} &  & 2 &  & 208779.90 & - &  & 208779.9 & - &  & 0 &  & 0 &  & 0\tabularnewline
 &  &  &  &  &  &  & 5 &  & 205575.23 & 430.8 &  & 205726.4 & - &  & 2 &  & 951 &  & 2.39\tabularnewline
 &  &  &  &  &  &  & 10 &  & 197814.36 & 727.2 &  & 197814.36 & - &  & 1 &  & 2868 &  & 0\tabularnewline
 &  &  &  &  &  &  & 20 &  & 193906.70 & 311.0 &  & 193906.7 & - &  & 1 &  & 6370 &  & 0\tabularnewline
 &  &  &  &  &  &  & 50 &  & 193775.74 & 278.3 &  & 193775.74 & - &  & 1 &  & 15460 &  & 0\tabularnewline
\hline 
\multirow{5}{*}{neos-911970} & \multirow{5}{*}{54.76} &  & \multirow{5}{*}{23.26} & \multirow{5}{*}{54.76} & \multirow{5}{*}{11.0} &  & 2 &  & 48.24 & - &  & 48.37 & - &  & 0 &  & 0 &  & 2.11\tabularnewline
 &  &  &  &  &  &  & 5 &  & 54.38 & - &  & 54.76 & - &  & 0 &  & 0 &  & 100\tabularnewline
 &  &  &  &  &  &  & 10 &  & 54.76 & 1098.8 &  & 54.76 & - &  & 1 &  & 709 &  & 0\tabularnewline
 &  &  &  &  &  &  & 20 &  & 52.40 & 278.5 &  & 54.76 & - &  & 3 &  & 1084 &  & 100\tabularnewline
 &  &  &  &  &  &  & 27 &  & 23.26 & 35.6 &  & 23.26 & 32.7 &  & 0 &  & 0 &  & 0\tabularnewline
\hline 
\multirow{5}{*}{neos-3627168-kasai} & \multirow{5}{*}{988585.62} &  & \multirow{5}{*}{945808.1} & \multirow{5}{*}{988585.62} & \multirow{5}{*}{859.4} &  & 2 &  & 981110.97 & - &  & 981110.97 & - &  & 0 &  & 0 &  & 0\tabularnewline
 &  &  &  &  &  &  & 5 &  & 976858.45 & 2720.6 &  & 976864.79 & \multicolumn{1}{c}{-} &  & 1 &  & 91 &  & 0.05\tabularnewline
 &  &  &  &  &  &  & 10 &  & 974695.38 & 280.6 &  & 974884.53 & 458.2 &  & 1 &  & 237 &  & 1.36\tabularnewline
 &  &  &  &  &  &  & 20 &  & 968889.48 & 106.8 &  & 968892.37 & 219.0 &  & 1 &  & 359 &  & 0.01\tabularnewline
 &  &  &  &  &  &  & 50 &  & 958310.99 & 71.9 &  & 958310.99 & 75.4 &  & 1 &  & 212 &  & 0\tabularnewline
\hline 
\multirow{5}{*}{neos-4650160-yukon} & \multirow{5}{*}{59.88} &  & \multirow{5}{*}{-22688.55} & \multirow{5}{*}{59.89} & \multirow{5}{*}{-} &  & 2 &  & 56.74 & - &  & 56.74 & - &  & 0 &  & 0 &  & 0\tabularnewline
 &  &  &  &  &  &  & 5 &  & 49.28 & - &  & 49.28 & - &  & 0 &  & 0 &  & 0\tabularnewline
 &  &  &  &  &  &  & 10 &  & 20.93 & 299.1 &  & 21.68 & - &  & 1 &  & 660 &  & 1.92\tabularnewline
 &  &  &  &  &  &  & 20 &  & 15.85 & 247.1 &  & 16.23 & - &  & 1 &  & 929 &  & 0.86\tabularnewline
 &  &  &  &  &  &  & 50 &  & -69.58 & 231.9 &  & -69.58 & 2346.3 &  & 2 &  & 1440 &  & 0\tabularnewline
\hline 
\caption{Extended MIPLIB results for instances where consistency cuts improve the lower bound for one of the 5 decompositions.} \label{MIPLIB3}
\end{longtable}
\end{singlespace}
\end{landscape}
\end{small}

\end{document}